\newtheorem{thm}{Theorem}[section]
\newtheorem{prop}[thm]{Proposition}
\newtheorem{coro}[thm]{Corollary}
\definecolor{blue}{rgb}{0,0,1}
{}
\title{On permutations avoiding partially ordered patterns defined by bipartite graphs}
\author{Sergey Kitaev\footnote{Department of Mathematics and Statistics, University of Strathclyde, 26 Richmond Street, Glasgow G1 1XH, United Kingdom. 
{\bf Email:} sergey.kitaev@strath.ac.uk.}\ \ and Artem Pyatkin\footnote{Sobolev Institute of Mathematics, Koptyug ave, 4, Novosibirsk, 630090, Russia.  {\bf Email:} artem@math.nsc.ru.}}
\begin{document}

\maketitle

\begin{abstract} Partially ordered patterns (POPs) generalize the notion of classical patterns studied in the literature in the context of permutations, words, compositions and partitions. In this paper, we give a number of general, and specific enumerative results for POPs in permutations defined by bipartite graphs, substantially extending the list of known results in this direction. In particular, we completely characterize the Wilf-equivalence for patterns defined by the N-shape posets. \\

\noindent
{\bf Keywords:} permutation pattern, partially ordered pattern, bipartite graph, enumeration, Wilf-equivalence, N-pattern\\

\noindent 
{\bf AMS Subject Classifications:}  05A05, 05A15
\end{abstract}

\section{Introduction}\label{intro-sec}

An occurrence of a (classical) permutation pattern $p=p_1\cdots p_k$ in a permutation $\pi=\pi_1\cdots\pi_n$ is a subsequence $\pi_{i_1}\cdots\pi_{i_k}$, where $1\leq i_1<\cdots< i_k\leq n$, such that $\pi_{i_j}<\pi_{i_m}$ if and only if $p_j<p_m$. For example, the permutation $364125$ has two occurrences of the pattern 123, namely, the subsequences 345 and 125, while this permutation {\em avoids} (that is, has no occurrences of) the pattern 4321. Permutation patterns are an active area of research that attracts much attention in the literature (e.g.\ see \cite{ALLV2016,BBEP2020,CGZ18,M2020} and references therein). 

A {\em partially ordered pattern} ({\em POP}) $p$ of length $k$ is defined by a $k$-element partially ordered set (poset) $P$ labeled by the elements in $[k]:=\{1,\ldots,k\}$. An occurrence of such a POP $p$ in a permutation $\pi=\pi_1\cdots\pi_n$ is a subsequence $\pi_{i_1}\cdots\pi_{i_k}$, where $1\leq i_1<\cdots< i_k\leq n$,  such that $\pi_{i_j}<\pi_{i_m}$ if  $j<m$ in $P$. Thus, a classical pattern of length $k$ corresponds to a $k$-element chain. For example, the POP $p=$ \hspace{-3.5mm}
\begin{minipage}[c]{3.5em}\scalebox{1}{
\begin{tikzpicture}[scale=0.5]

\draw [line width=1](0,-0.5)--(0,0.5);

\draw (0,-0.5) node [scale=0.4, circle, draw,fill=black]{};
\draw (1,-0.5) node [scale=0.4, circle, draw,fill=black]{};
\draw (0,0.5) node [scale=0.4, circle, draw,fill=black]{};

\node [left] at (0,-0.6){${\small 3}$};
\node [right] at (1,-0.6){${\small 2}$};
\node [left] at (0,0.6){${\small 1}$};

\end{tikzpicture}
}\end{minipage}
occurs four times in the permutation 41253, namely, as the subsequences 412, 413, 423, and 453, as in each of these subsequences the element in position 1 is bigger than that in position 3. Clearly, avoiding the pattern $p$ is the same as avoiding the patterns 312, 321	and 231 at the same time, so POPs provide a convenient language to deal with larger sets of permutation patterns. Moreover, the notion of a POP provides a uniform notation for several combinatorial structures such as peaks, valleys, modified maxima and minima, $p$-descents in permutations, and many others~\cite{Kit4}.

A POP is {\em bipartite} if the underlying graph of the poset defining it is a bipartite graph. Clearly, a POP is bipartite if and only if in the Hasse diagram of the poset defining it the longest increasing chain has length 1.

Permutations of length $n$ are called $n$-permutations, and $S_n$ denotes the set of all $n$-permutations. For an $n$-permutation $\pi$, the {\em complement} $c(\pi)$ of $\pi$ is obtained from $\pi$ by replacing each element $x$ by $n+1-x$. For example, $c(3142)=2413$. The same operation is well-defined on labels in $[n]$ of an $n$-element poset. Also, the {\em reverse} $r(\pi)$ of $\pi$ is obtained by writing the elements of $\pi$ in the reverse order. For example, $r(52413)=31425$. The complement, reverse, and usual group theoretical inverse are known as {\em trivial bijections} from $S_n$ to $S_n$. Finally, for a sequence $s$ of numbers, the {\em reduced form} red($s$) of $s$ is obtained from $s$ by replacing in it the $i$-th smallest element by $i$ for $i=1,2,\ldots$. For example, red$((2,6,9,1,4))=(2,4,5,1,3)$ and in the context of this paper, commas and brackets will be omitted. 

Denote by $S_n(p)$ the set of $n$-permutations avoiding $p$. Patterns $p_1$ and $p_2$ are {\em Wilf-equivalent} if $|S_n(p_1)|=|S_n(p_2)|$ for every $n\geq 0$. Throughout this paper, the notation $a(n)$ is used for the number of $n$-permutations avoiding 
the pattern $p$ in question, that is, $a(n)=|S_n(p)|$. 

POPs were introduced in \cite{Kit1}, and they were studied in the context of permutations, words, compositions and partitions (see references in \cite{GK19}).   A systematic search for connections between sequences in the {\em Online Encyclopedia of Integer Sequences} ({\em OEIS}) \cite{oeis} and permutations avoiding POPs of length 4 and 5 was conducted in \cite{GK19}, which resulted in 13 new enumerative results for classical patterns of length 4 and 5. Five out of 15 open problems stated in \cite{GK19} were answered in \cite{YWZ21}. The most relevant to our paper are the following general results about bipartite POPs obtained in \cite{GK19}, where ``g.f.'' stands for ``generating function''. 

\begin{figure}[htbp]
  \centering
\begin{tikzpicture}[scale=0.8]

\draw [line width=1](0,0)--(2,1.5);
\draw [line width=1](1,0)--(2,1.5);
\draw [line width=1](2,0)--(2,1.5);
\draw [line width=1](4,0)--(2,1.5);

\draw (0,0) node [scale=0.4, circle, draw,fill=black]{};
\draw (1,0) node [scale=0.4, circle, draw,fill=black]{};
\draw (2,0) node [scale=0.4, circle, draw,fill=black]{};
\draw (4,0) node [scale=0.4, circle, draw,fill=black]{};
\draw (2,1.5) node [scale=0.4, circle, draw,fill=black]{};

\node [below] at (0,0){$x_2$};
\node [below] at (1,0){$x_3$};
\node [below] at (2,0){$x_4$};
\node [below] at (4,0){$x_k$};
\node [above] at (2,1.5){$x_1$};

\node [left] at (0,1.5){$a)$};
\node [left] at (7,1.5){$b)$};

\draw (2.5,0.2) node [scale=0.15, circle, draw,fill=black]{};
\draw (2.75,0.2) node [scale=0.15, circle, draw,fill=black]{};
\draw (3,0.2) node [scale=0.15, circle, draw,fill=black]{};

\draw [line width=1](7,0)--(8,1.5);
\draw [line width=1](8,0)--(8,1.5);
\draw [line width=1](12,0)--(8,1.5);
\draw [line width=1](7,0)--(11,1.5);
\draw [line width=1](8,0)--(11,1.5);
\draw [line width=1](12,0)--(11,1.5);

\draw (7,0) node [scale=0.4, circle, draw,fill=black]{};
\draw (8,0) node [scale=0.4, circle, draw,fill=black]{};
\draw (12,0) node [scale=0.4, circle, draw,fill=black]{};
\draw (8,1.5) node [scale=0.4, circle, draw,fill=black]{};
\draw (11,1.5) node [scale=0.4, circle, draw,fill=black]{};

\node [below] at (7,-0.1){$2$};
\node [below] at (8,-0.1){$3$};
\node [below] at (12,-0.1){$ k-1$};
\node [above] at (8,1.5){$1$};
\node [above] at (11,1.5){$k$};

\draw (9,0.1) node [scale=0.15, circle, draw,fill=black]{};
\draw (9.25,0.1) node [scale=0.15, circle, draw,fill=black]{};
\draw (9.5,0.1) node [scale=0.15, circle, draw,fill=black]{};

\end{tikzpicture}
\caption{The POPs in Theorems~\ref{thm-B1} and~\ref{thm-B2}.}
 \label{pic-B1}
\end{figure}
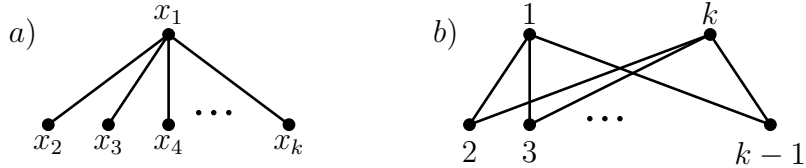

\begin{thm}[\cite{GK19}]\label{thm-B1} Let $p$ be the POP in Figure~\ref{pic-B1}a, where 
 $\{x_1,\ldots,x_k\}=[k]$ and $k\geq 1$. Then, 
$$a(n)=\left\{ \begin{array}{ll} 
n! & \mbox{if }n<k\\ 
(k-1)!(k-1)^{n-k+1} & \mbox{if } n\geq k.\end{array}\right.$$
Also, the respective g.f.\ is
$$\sum_{n\geq 0}a(n)x^n=\frac{(k-1)(k-1)!x^k}{1-(k-1)x}+\sum_{i=0}^{k-1}i!x^i.$$
\end{thm}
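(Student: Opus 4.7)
My plan is to establish the closed form via the one-step recurrence $a(n)=(k-1)a(n-1)$ for $n\ge k$, anchored by the trivial base case $a(n)=n!$ for $n<k$. The first step will be to reformulate the occurrence condition. Write $t:=x_1\in[k]$ for the label attached to the top vertex of the star. Since the only comparabilities in the poset $P$ are $j<_P t$ for each leaf label $j\ne t$, an occurrence of $p$ in $\pi$ is precisely a choice of positions $1\le i_1<\cdots<i_k\le n$ with $\pi_{i_t}=\max\{\pi_{i_1},\ldots,\pi_{i_k}\}$. Equivalently, $\pi$ contains $p$ iff there is a position $m$ of $\pi$ with at least $t-1$ smaller values to its left and at least $k-t$ smaller values to its right. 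For $n<k$ no such configuration can exist, so $a(n)=n!$; in particular $a(k-1)=(k-1)!$.

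For $n\ge k$, I plan to fix $\sigma\in S_{n-1}(p)$ and determine which of the $n$ positions $j$ for the inserted maximum value $n$ yield $\pi\in S_n(p)$. A direct count using the reformulation shows that $n$ itself can serve as the center of an occurrence in $\pi$ precisely when $j-1\ge t-1$ and $n-j\ge k-t$, i.e., when $t\le j\le n-k+t$. Thus the set of ``safe'' positions is
\[
\{1,\ldots,t-1\}\cup\{n-k+t+1,\ldots,n\},
\]
a disjoint union of cardinality $(t-1)+(k-t)=k-1$ (disjointness uses $n\ge k-1$). I will then verify the converse: if $j$ is safe, any putative occurrence of $p$ in $\pi$ either omits $n$---and so descends to an occurrence in $\sigma$, contradicting $\sigma\in S_{n-1}(p)$---or includes $n$ as its maximum, forcing $i_t=j$ and hence requiring $t-1$ indices to the left and $k-t$ indices to the right of $j$, both of which are ruled out by the definition of safe. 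Together these give $a(n)=(k-1)\,a(n-1)$.

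Iterating this recurrence from $a(k-1)=(k-1)!$ produces the claimed $a(n)=(k-1)!(k-1)^{n-k+1}$ for $n\ge k$, and the generating function then follows by splitting the sum at $n=k$:
\[
\sum_{n\ge 0}a(n)x^n=\sum_{i=0}^{k-1}i!\,x^i+(k-1)(k-1)!\,x^k\sum_{n\ge 0}((k-1)x)^n=\sum_{i=0}^{k-1}i!\,x^i+\frac{(k-1)(k-1)!\,x^k}{1-(k-1)x}.
\]
The one point needing care is the reformulation step: it is the position $t$ of the root label, not merely the shape of the star, that controls which insertion sites are safe. The symmetric total $(t-1)+(k-t)=k-1$ then makes the dependence on $t$ evaporate from the final count, which is what makes all star POPs of this shape Wilf-equivalent regardless of the labeling used for the root.
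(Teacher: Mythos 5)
Your proof is correct and follows essentially the route the paper itself indicates: the paper quotes this result from \cite{GK19} and notes that the $j=0$ case of Theorem~\ref{thm-general-1} yields $a(n)=(k-1)a(n-1)$, which is exactly the recurrence you obtain by observing that the maximum element must occupy one of the $(t-1)+(k-t)=k-1$ safe end positions and that its insertion/removal preserves avoidance. Iterating from $a(k-1)=(k-1)!$ and summing the resulting series gives the closed form and the generating function just as you do.
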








\begin{thm}[\cite{GK19}]\label{thm-B2} Let $p$ be the POP in Figure~\ref{pic-B1}b, where 
 $k\geq 2$ (if $k=2$ then $p$ is two independent elements). Then, $$a(n)=\left\{ \begin{array}{ll} 
n! & \mbox{if }n<k\\ 
2(k-2) \cdot a(n-1) - (k-2)(k-3)\cdot a(n-2) & \mbox{if } n\geq k.\end{array}\right.$$
Also, the respective g.f.\ is
$$\sum_{n\geq 0}a(n)x^n=\frac{A(x)-B(x)+C(x)}{1-2(k-2)x + (k-2)(k-3)x^2},$$
where $$A(x) = \sum_{i=0}^{k-3}i!x^i,\  B(x)= 2(k-2)\sum_{i=0}^{k-4}i!x^{i+1}, C(x)=(k-2)(k-3)\sum_{i=0}^{k-5}i!x^{i+2}.$$
 \end{thm}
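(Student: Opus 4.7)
The plan is to express $S_n(p)$ as a union of easily counted sets and then apply inclusion-exclusion. The initial conditions $a(n) = n!$ for $n < k$ are immediate, since $p$ has length $k$ and every shorter permutation avoids it.

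For $n \geq k$, the crux is the following structural lemma: \emph{if $\pi \in S_n$ has none of the values $1, 2, \ldots, k-2$ at position $1$ or $n$, then $\pi$ contains $p$.} Indeed in that case $\pi_1, \pi_n \geq k-1$, and picking $\pi_1$, the positions of the values $1, \ldots, k-2$ (which all lie in $\{2,\ldots,n-1\}$), and $\pi_n$ gives $k$ indices $1 < j_1 < \cdots < j_{k-2} < n$ witnessing an occurrence of $p$. Writing $A_v^{\mathrm{pos}} := \{\pi \in S_n(p) : \pi_{\mathrm{pos}} = v\}$, this translates into the cover
\[
S_n(p) = \bigcup_{v=1}^{k-2}\left(A_v^{1} \cup A_v^{n}\right).
\]

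Next I would compute the cardinalities using a single key observation: a value $v \leq k-2$ cannot serve as the top element $\pi_{i_1}$ or $\pi_{i_k}$ of an occurrence of $p$, because such a top must dominate $k-2$ distinct smaller values, whereas only $v-1 \leq k-3$ values are strictly less than $v$. From this, $\pi \in A_v^{\mathrm{pos}}$ is equivalent to ``$\pi_{\mathrm{pos}} = v$ and the $(n-1)$-permutation obtained by deleting that entry avoids $p$'', giving $|A_v^{\mathrm{pos}}| = a(n-1)$; by the same argument at both extremes, $|A_v^1 \cap A_w^n| = a(n-2)$ for distinct $v,w \in \{1,\ldots,k-2\}$. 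Any intersection of three or more of the events must place two distinct required values at the same extreme position and is therefore empty. Inclusion-exclusion over the $2(k-2)$ events and their $(k-2)(k-3)$ non-empty pairwise intersections yields
\[
a(n) = 2(k-2)\,a(n-1) - (k-2)(k-3)\,a(n-2), \qquad n \geq k.
\]

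Finally, the generating function follows from the recurrence in the standard way. Multiplying by $x^n$ and summing gives $G(x)\bigl(1 - 2(k-2)x + (k-2)(k-3)x^2\bigr) = \sum_{n\geq 0} r(n)\,x^n$, where $r(n) := a(n) - 2(k-2)a(n-1) + (k-2)(k-3)a(n-2)$. A short algebraic check shows $r(n) = 0$ also for $n = k-1$ and $n = k-2$ (the factorials accidentally satisfy the recurrence at those values), so the numerator truncates at $n \leq k-3$; substituting $a(n)=n!$ and reindexing the three resulting finite sums reproduces exactly $A(x) - B(x) + C(x)$. The main obstacle is spotting the structural lemma itself --- recognising that ``some value in $\{1, \ldots, k-2\}$ occupies position $1$ or $n$'' is precisely the feature whose failure forces an occurrence of $p$, and hence precisely the right family of events for inclusion-exclusion. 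Once that is in place, the bijective counts and the generating-function manipulation are routine.
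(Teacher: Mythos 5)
Your argument is correct, and it is worth noting that the paper itself does not prove Theorem~\ref{thm-B2} at all --- it is quoted from \cite{GK19} --- so the only comparison available is with the paper's own proofs of the analogous results (Theorems~\ref{t3}, \ref{thm-general-1} and~\ref{t4}). Those proofs pivot on the two (or $j+1$) \emph{largest} values of $\pi$: either they all fall into a window of positions forcing an occurrence, or at least one lies in a boundary set $C$ of $k-2$ positions where it cannot participate in an occurrence, after which deletion ($p$-equivalence) and inclusion--exclusion give the recurrence. Your proof is the mirror image of this, tailored to the shape $A=\{1,k\}$: you track the $k-2$ \emph{smallest} values and show that at least one of them must occupy one of the two boundary positions $1$ or $n$, where it cannot take part in any occurrence (it cannot be a top of the pattern for cardinality reasons, and an entry at position $1$ resp.\ $n$ can only serve as the first resp.\ last element of an occurrence --- a step you should state explicitly, though it is immediate), and then you delete and apply inclusion--exclusion over the $2(k-2)$ events, whose only non-empty pairwise intersections are the $(k-2)(k-3)$ mixed ones. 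This is a clean and complete derivation of the recurrence for all $k\ge 2$, and it buys a structural lemma specific to the ``tops at both ends'' pattern, whereas the paper's technique is geared to $A$ being a block of adjacent positions.

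One small caveat on the generating-function step: the cancellation $r(k-2)=0$ (and hence the truncation of the numerator at degree $k-3$) uses $a(k-4)=(k-4)!$ and so requires $k\ge 4$; for $k=3$ one gets numerator $1-x$ rather than $A(x)-B(x)+C(x)=1$, and for $k=2$ the displayed formula also fails. This degeneracy is inherited from the quoted statement (whose sums $B$ and $C$ are empty in those cases), not a defect of your argument, but it would be worth flagging the range of $k$ for which the closed-form g.f.\ is being asserted.
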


The following simple result is also useful. 

\begin{thm}[\cite{GK19}]\label{trivial-sym-thm} Let $p$ be a POP of length $k$. Also, let $p'$ be the POP obtained from $p$ by applying the complement operation on its labels, that is, by replacing a label $x$ by $k+1-x$. Moreover, let  $p''$ be the POP obtained from $p$ by flipping the poset with respect to a horizontal line. Then, $|S_n(p)|=|S_n(p')|=|S_n(p'')|$ for any $n\geq 0$, that is, $p$, $p'$ and $p''$ are Wilf-equivalent. \end{thm}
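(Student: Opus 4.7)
The plan is to exhibit explicit bijections realizing the two claimed equalities, using the trivial bijections on $S_n$ already introduced in the paper.

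For $|S_n(p)|=|S_n(p'')|$, I would verify that the complement map $\pi\mapsto c(\pi)$ restricts to a bijection $S_n(p)\to S_n(p'')$. The point is that $c(\pi)_i=n+1-\pi_i$ reverses every strict inequality between entries, while flipping the poset with respect to a horizontal line simultaneously reverses every order relation $j<_P m$. These two reversals cancel, so the same positions $i_1<\cdots<i_k$ form an occurrence of $p$ in $\pi$ if and only if they form an occurrence of $p''$ in $c(\pi)$. Since $c$ is an involution on $S_n$, this yields the bijection.

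For $|S_n(p)|=|S_n(p')|$, I would verify that the reverse map $\pi\mapsto r(\pi)$ restricts to a bijection $S_n(p)\to S_n(p')$. Relabeling by $x\mapsto k+1-x$ does not change the Hasse diagram, but it does invert the correspondence between the label order and the position order within an occurrence; concretely, $a<_{P'} b$ if and only if $k+1-a<_P k+1-b$. Under $r$, a subsequence of $r(\pi)$ at positions $i_1<\cdots<i_k$ corresponds to a subsequence of $\pi$ at positions $j_c:=n+1-i_{k+1-c}$ (still increasing in $c$), with $r(\pi)_{i_c}=\pi_{j_{k+1-c}}$. Substituting and setting $a':=k+1-a$, $b':=k+1-b$ rewrites the occurrence condition for $p'$ in $r(\pi)$ as the occurrence condition for $p$ in $\pi$, as desired.

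The only real obstacle is the bookkeeping. One must keep in mind that the operation $x\mapsto k+1-x$ preserves the Hasse diagram but permutes labels (hence pairs naturally with $r$, which permutes positions), whereas the horizontal flip alters the Hasse diagram but preserves labels (hence pairs with $c$, which negates values). With the right pairing fixed, each verification reduces to a single-line substitution check, and no combinatorial structure beyond these symmetries of the definition is required.
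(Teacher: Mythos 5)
Your proposal is correct: the pairing you fix (complement $c$ with the horizontal flip $p''$, reverse $r$ with the label-complement $p'$) is exactly the right one, and both substitution checks go through as you describe. Note that the paper itself states this result as a citation from \cite{GK19} without reproducing a proof, and your argument via the trivial bijections is precisely the standard one used there, so there is nothing to add or repair.
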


In this paper, we extend the results in Theorems~\ref{thm-B1} and~\ref{thm-B2} to other bipartite POPs and discuss certain generalizations for some of them. The paper is organized as follows. In Section~\ref{complete-bipartite-POPs}, we derive enumerative results about certain classes of POPs defined by complete bipartite graphs, in particular, enumerating all avoidance classes for such POPs of length at most 5. In Section~\ref{WC-N-patterns-sec}, we give a complete classification of the Wilf-equivalence for N-patterns, which are the bipartite POPs of length 4 defined by the posets of N-shape. We prove that there are three Wilf-equivalence classes for N-patterns, and enumerate all classes. In Section~\ref{sticks-sec}, we discuss enumeration of so-called DC POPs defined by disjoint chains (including those of length 2). Finally, in Section~\ref{final-sec} we discuss directions for further research.

\section{POPs defined by complete bipartite graphs}\label{complete-bipartite-POPs}

We denote by $p=(A/B)$, where $A\subset [k]$ and $B=[k] \setminus A$, the POP $p$ of length $k$ in which each element corresponding to a position from $A$ must be larger than all elements that are in positions from $B$.  We call such patterns {\it complete bipartite POPs}. We are interested in the number of $n$-permutations avoiding a given complete bipartite POP $p$ of length $k$.  Clearly, for $n<k$ there are $n!$ $p$-avoiding $n$-permutations, so in our arguments below we can assume that $n\ge k$. Note that the cases of $|A|=1$ and $A=\{1,k\}$ are given by Theorems~\ref{thm-B1} and~\ref{thm-B2}, respectively. 

In this section, we prove that the number $a(n)$ of $n$-permutations avoiding POPs with $A\in \{ \{i,i+1\}, \{ j,j+2\}\}$, where $i\in [n-1]$ and $j\in [n-2]$, satisfies the same recurrence relation as that in Theorem~\ref{thm-B2}.

Given an $n$-permutation $\pi$ and $t\in [n]$, denote by $\pi'=\pi \setminus t$ the $(n-1)$- permutation obtained from $\pi$ by deleting the element $t$ and decreasing by 1 each element in $\pi$ larger than $t$. For any POP $p$, we say that $\pi$ and $\pi'$ are $p$-{\it equivalent} if either both of the permutations contain an occurrence of $p$, or both of them are $p$-avoiding. For example, let $p=(A/B)$ be a complete bipartite POP of length $k$, where $|A|=s$ and $1\in A$. If $\pi=\pi_1\cdots\pi_n\in S_n$ and $\pi_1=t$, where $t\in [k-s]$, then $\pi$ and $\pi'=\pi\setminus t$ are $p$-equivalent, since the first element of $\pi$ clearly cannot be a part of $p$. It is straightforward to extend the notion of $p$-equivalence to the situation when more than one element are removed from a permutation. 

Note that for establishing $p$-equivalence, it is sufficient to prove the implication: ``If $\pi$ contains $p$ then $\pi'$ contains $p$'', since the opposite implication is trivial taking into account the hereditary nature of POP containment.

Our next theorem gives the same recurrence relation, and hence the same g.f., as that in Theorem~\ref{thm-B2}. In what follows, we use the notation $[x,y]:=\{x,x+1,\ldots,y\}$ and remind that $[x]=[1,x]$.

\begin{thm}\label{t3}
Let $p=(A/B)$ be a complete bipartite POP of length $k$. If $A=\{i,i+1\}$, where $i\in[k-1]$ for $k\geq 3$, then 
$$a(n)=\left\{ \begin{array}{ll} 
n! & \mbox{if }n<k\\ 
2(k-2)\cdot a(n-1)-(k-2)(k-3)\cdot a(n-2)& \mbox{if } n\geq k.\end{array}\right.$$
\end{thm}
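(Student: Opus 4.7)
The plan is to establish the recurrence via an inclusion-exclusion argument based on the positions of the two largest values $n$ and $n-1$ in $\pi$. I would call a position $s\in[n]$ \emph{safe} if $s\in[1,i-1]\cup[n-k+i+2,n]$ and \emph{risky} otherwise, so the safe set consists of exactly $k-2$ positions and the risky set is $[i,n-k+i+1]$. The central $p$-equivalence observation is that if the value $n$ occupies a safe position in $\pi$, then $n$ cannot appear in any occurrence of $p$: being the maximum, $n$ must play a high role (at label $i$ or $i+1$ of the subsequence) whenever it is used, which forces its position into the risky interval $[i,n-k+i+1]$. The same conclusion holds for $n-1$, with the additional remark that $n-1$ cannot play a low role either, since any low is strictly smaller than both highs and only $n$ exceeds $n-1$.

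Let $V_1=\{\pi\in S_n:n\text{ is at a safe position}\}$ and $V_2=\{\pi\in S_n:n-1\text{ is at a safe position}\}$. For each safe position $s$, the map $\pi\mapsto\pi\setminus n$ restricts to a bijection between $\{\pi\in S_n(p):\pi_s=n\}$ and $S_{n-1}(p)$ by the above $p$-equivalence, so $|V_1\cap S_n(p)|=(k-2)a(n-1)$. An analogous argument using $\pi\mapsto\pi\setminus(n-1)$ yields $|V_2\cap S_n(p)|=(k-2)a(n-1)$. Removing both $n$ and $n-1$ simultaneously gives $|V_1\cap V_2\cap S_n(p)|=(k-2)(k-3)a(n-2)$, the factor $(k-2)(k-3)$ counting ordered pairs of distinct safe positions for the values $n$ and $n-1$.

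The key step is the coverage claim $S_n(p)\subseteq V_1\cup V_2$, which I would prove by contrapositive. Suppose both $n$ and $n-1$ occupy risky positions $s_1\neq s_2$, and set $\alpha=\min(s_1,s_2)$ and $\beta=\max(s_1,s_2)$. Then $\alpha\geq i$ and $\beta\leq n-k+i+1$, and every position outside $\{\alpha,\beta\}$ carries a value strictly less than $n-1=\min(\pi_\alpha,\pi_\beta)$. Consequently, $[1,\alpha-1]$ supplies $\alpha-1\geq i-1$ valid low positions and $[\beta+1,n]$ supplies $n-\beta\geq k-i-1$ such positions. Choosing any $i-1$ and $k-i-1$ of them respectively, together with the highs at $\alpha$ and $\beta$, produces an occurrence of $p$, contradicting $\pi\in S_n(p)$.

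Inclusion-exclusion then delivers
$$a(n)=2(k-2)a(n-1)-(k-2)(k-3)a(n-2),$$
as claimed. The main obstacle is the coverage claim; every other step is a routine $p$-equivalence or bijection calculation. The coverage argument crucially exploits the adjacency of the two high labels in $A=\{i,i+1\}$: since no low labels lie between positions $i$ and $i+1$ of the subsequence, no low positions of $\pi$ need to be found strictly between $\alpha$ and $\beta$, and the counts of small values available in $[1,\alpha-1]$ and $[\beta+1,n]$ alone suffice to realize the occurrence.
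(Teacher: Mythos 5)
Your proposal is correct and follows essentially the same route as the paper: inclusion–exclusion over whether $n$ and/or $n-1$ sits in one of the $k-2$ positions of $[i-1]\cup[n-k+i+2,n]$ where it cannot take part in an occurrence, with the observation that both landing in $[i,n-k+i+1]$ forces an occurrence of $p$. Your write-up merely spells out the coverage claim and the fact that $n-1$ can only play a high role, which the paper treats as evident.
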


\begin{proof} Let $\pi=\pi_1\cdots\pi_n\in S_n$ and $\pi_s=n$ and $\pi_t=n-1$ for some $s,t\in [n]$. Clearly, if both $s,t\in [i,n-k+i+1]$ then $\pi$ contains $p$. 
On the other hand, if $s\in [i-1]\cup [n-k+i+2,n]$ (respectively, $t\in [i-1]\cup [n-k+i+2,n]$) then $n=\pi_s$ (respectively, $n-1=\pi_t$) cannot be a part of $p$, and thus, $\pi$ and $\pi\setminus n$ (respectively, $\pi\setminus (n-1)$) are $p$-equivalent. In total, there are $2(k-2)\cdot a(n-1)$ such $p$-avoiding permutations. Among them, there are $(k-2)(k-3)\cdot a(n-2)$ $p$-avoiding permutations having both $s,t\in [i-1]\cup [n-k+i+2,n]$ that were counted twice (note that in this case both $n$ and $n-1$ cannot be a part of $p$ and hence $\pi$ and $\pi\setminus \{n-1,n\}$ are $p$-equivalent). In total, we have
$$a(n)=2(k-2)\cdot a(n-1)-(k-2)(k-3)\cdot a(n-2),$$
as required.
\end{proof}

Generalizing the proof of Theorem~\ref{t3} in a straightforward way (using the principle of inclusion-exclusion) gives the following result.

\begin{thm}\label{thm-general-1} Let $p=(A/B)$ be a complete bipartite POP of length $k$. For $A=[i,i+j]$, where $i\in[k-j]$ and $k\geq j+2$,
$a(n)=n!$ if $n<k$, and for $n\geq k$,
$$a(n)=(j+1)(k-j-1)\cdot a(n-1)-\binom{j+1}{2}(k-j-1)(k-j-2)\cdot a(n-2)$$
$$+\binom{j+1}{3}(k-j-1)(k-j-2)(k-j-3)\cdot a(n-3)+\cdots$$
$$+(-1)^j(k-j-1)(k-j-2)\cdots (k-2j-1)\cdot a(n-j-1).$$
In particular, for $A=\{i,i+1,i+2\}$ (the case of $j=2$), we have 
$$a(n)=3(k-3)\cdot a(n-1)-3(k-3)(k-4)\cdot a(n-2)+(k-3)(k-4)(k-5)\cdot a(n-3).$$
\end{thm}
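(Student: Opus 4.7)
The plan is to mimic the proof of Theorem~\ref{t3}, but tracking all $j+1$ largest elements $n, n-1, \ldots, n-j$ and combining the resulting over-counts via inclusion--exclusion, as the statement itself hints.

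First I would call a position \emph{active} if it lies in $[i, n-k+i+j]$ and \emph{boundary} otherwise; the boundary has exactly $k-j-1$ positions. The crux of the argument is a two-sided structural lemma. Part (a): if each of $n, n-1, \ldots, n-j$ occupies an active position then $\pi$ contains $p$, because these serve as the top elements and the active-block inequalities guarantee $i-1$ slots to the left of the leftmost of them and $k-i-j$ slots to the right of the rightmost, while any other element of $\pi$ is automatically below all tops. Part (b): if $n-\ell$ with $0 \le \ell \le j$ sits in a boundary position, then it cannot participate in any occurrence of $p$, because its location forces its subsequence index outside $[i, i+j]$, i.e.\ it would have to be a bottom element, requiring $j+1$ larger tops while only $\ell \le j$ elements of $\pi$ exceed $n-\ell$. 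Hence $\pi$ and $\pi \setminus (n-\ell)$ are $p$-equivalent, and the same equivalence extends to simultaneous deletion of any subset of boundary-placed top elements.

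With those ingredients in hand, set $N_\ell = \{\pi \in S_n(p) : n-\ell \text{ sits in a boundary position of } \pi\}$. Part (a) gives $S_n(p) = N_0 \cup \cdots \cup N_j$. For $T \subseteq \{0, \ldots, j\}$ with $|T|=t$, part (b) yields
\[
\Bigl| \bigcap_{\ell \in T} N_\ell \Bigr| = (k-j-1)(k-j-2) \cdots (k-j-t) \cdot a(n-t),
\]
by first injecting the $t$ top elements $\{n-\ell : \ell \in T\}$ into the $k-j-1$ boundary positions and then filling the remaining $n-t$ slots with any $p$-avoiding $(n-t)$-permutation (after reduction). Since this count depends only on $|T|$ and there are $\binom{j+1}{t}$ such subsets, inclusion--exclusion delivers exactly the stated recurrence, and the $j=2$ specialization is immediate from $\binom{3}{1}=\binom{3}{2}=3$ and $\binom{3}{3}=1$.

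The most delicate point I foresee is the simultaneous form of part (b): verifying that the "at most $\ell$ elements exceed $n-\ell$" obstruction is preserved when several other top elements are deleted concurrently. This does go through cleanly because the criterion is per-element and the population of elements larger than $n-\ell$ can only decrease under deletions, but it is where the single-variable reasoning of Theorem~\ref{t3} must be articulated most carefully to license the full inclusion--exclusion.
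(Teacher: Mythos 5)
Your proposal is correct and follows essentially the same route as the paper's proof: identify the $k-j-1$ "boundary" positions $[i-1]\cup[n-k+i+j+1,n]$, observe that a top element placed there cannot take part in any occurrence of $p$ while all tops in active positions force an occurrence, and then apply inclusion--exclusion over which of the $j+1$ largest elements sit in boundary positions, counting each intersection by a falling-factorial placement times $a(n-t)$. Your explicit formulation via the sets $N_\ell$ and the simultaneous-deletion check merely spells out details the paper leaves implicit.
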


\begin{proof} Let $\pi=\pi_1\cdots\pi_n\in S_n$. Let us say that an element $x\in \pi$ is {\it large} if it is among the $j+1$ largest elements, i.e.\ $x\in [n-j,n]$.
Clearly, if the positions of all large elements in $\pi$ are in $[i,n-k+i+j]$ then $\pi$ contains $p$. On the other hand, 
any large element whose position is in $C:=[i-1]\cup [n-k+i+j+1,n]$ cannot be a part of $p$. 
So, applying the principle of inclusion-exclusion, we count the number of $p$-avoiding permutations with one large element in $C$, reduce that number by the number of $p$-avoiding permutations with two large elements in $C$, add the number of $p$-avoiding permutations with three large elements in $C$, etc. In order to count the number of $p$-avoiding permutations with $\ell$ large elements in $C$, note that there are $\binom{j+1}{\ell}$ ways to choose $\ell$ large elements. The first of these elements has $(k-j-1)$ possible positions in $C$, the second one has just $(k-j-2)$ positions, and so on, the $\ell$-th element has $(k-j-\ell)$ possible positions. Removing these $\ell$ large elements we obtain a $p$-equivalent permutation of length $n-\ell$. Hence, the total  number of $p$-avoiding permutations with $\ell$ large elements in $C$ is
$$\binom{j+1}{\ell}(k-j-1)(k-j-2)\cdots (k-j-\ell)\cdot a(n-\ell).$$
Summing up these summands with a multiple of $(-1)^{\ell-1}$ for all $\ell$ from $1$ to $j+1$ gives the required formula.
\end{proof}


Note that in the case of $j=0$ in Theorem~\ref{thm-general-1}, we deal with the POP in Figure~\ref{pic-B1}a, and the recurrence given by Theorem~\ref{thm-general-1} in this case, namely, $a(n)=(k-1)a(n-1)$, can be used to derive the formula in Theorem~\ref{thm-B1}.

\begin{thm}\label{t4}
Let $p=(A/B)$ be a complete bipartite POP of length $k$. If $A=\{i,i+2\}$, where $i\in[k-2]$ and $k\geq 3$, then 
$$a(n)=\left\{ \begin{array}{ll} 
n! & \mbox{if }n<k\\ 
2(k-2)\cdot a(n-1)-(k-2)(k-3)\cdot a(n-2) & \mbox{if } n\geq k.\end{array}\right.$$
\end{thm}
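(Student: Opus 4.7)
The plan follows the template of the proof of Theorem~\ref{t3}, but one extra case is needed because in $A=\{i,i+2\}$ the two ``large'' subsequence positions are separated by the ``small'' position $i+1$, which allows the two largest elements of $\pi$ to sit in adjacent positions of $\pi$ without forcing an occurrence of $p$. Fix $\pi\in S_n$ with $\pi_s=n$ and $\pi_t=n-1$, and set $U=[i,n-k+i+2]$, $S=[n]\setminus U=[1,i-1]\cup[n-k+i+3,n]$, so that $|S|=k-3$.

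First I establish two structural facts. If $s\in S$, then $n$ cannot occupy subseq position $i$ (which requires $s\in[i,n-k+i]$) or $i+2$ (which requires $s\in[i+2,n-k+i+2]$) of any occurrence, and being the largest, $n$ cannot occupy any subseq position in $B$ either; hence $\pi$ and $\pi\setminus n$ are $p$-equivalent. The symmetric fact holds for $n-1$. Secondly, if $s,t\in U$ with $|s-t|\ge 2$, then $\pi$ contains $p$: assuming $s<t$, one chooses any $i-1$ elements from $\pi_1\cdots\pi_{s-1}$, one element from $\pi_{s+1}\cdots\pi_{t-1}$, and any $k-i-2$ elements from $\pi_{t+1}\cdots\pi_n$, together with $n$ at subseq position $i$ and $n-1$ at subseq position $i+2$; the conditions $s\ge i$, $t-s\ge2$, and $t\le n-k+i+2$ guarantee the required elements exist.

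Consequently every $p$-avoider falls into one of two disjoint classes: (A) at least one of $s,t$ lies in $S$; (B) both $s,t$ lie in $U$ and $|s-t|=1$. Standard inclusion-exclusion on the two events in (A) contributes $2(k-3)\,a(n-1)-(k-3)(k-4)\,a(n-2)$, exactly mirroring the count in Theorem~\ref{t3} but with $(k-3)$ replacing $(k-2)$.

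The main obstacle is counting (B), and it is handled by a collapsing bijection. When $n$ and $n-1$ are in adjacent positions in $\pi$, a direct check from the definition of $\pi\setminus x$ shows $\pi\setminus n=\pi\setminus(n-1)$; call this common $(n-1)$-permutation $\pi'$, whose maximum sits at position $\min(s,t)\in[i,n-k+i+1]=U_{n-1}$. I claim that $\pi$ avoids $p$ iff $\pi'$ does. The forward direction is the hereditary property; for the converse, any occurrence of $p$ in $\pi$ cannot use both $n$ and $n-1$, since they would have to lie at the non-consecutive subseq positions $i$ and $i+2$ of $A$ while being consecutive in $\pi$, so the occurrence uses at most one of them and therefore descends to an occurrence of $p$ in $\pi'$. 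Since each $\pi'\in S_{n-1}$ whose maximum lies in $U_{n-1}$ has exactly two preimages in (B) (one for each order of $n,n-1$), one obtains
$$|B|=2\bigl(a(n-1)-(k-3)\,a(n-2)\bigr),$$
the subtracted quantity counting the $\pi'$'s whose maximum lies in $S_{n-1}$ instead. Adding $|A|$ and $|B|$ simplifies to $2(k-2)\,a(n-1)-(k-2)(k-3)\,a(n-2)$, completing the proof.
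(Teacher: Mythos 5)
Your proposal is correct and follows essentially the same route as the paper's proof: split off the permutations in which $n$ or $n-1$ sits in the $k-3$ ``inactive'' positions (counted by inclusion--exclusion as $2(k-3)a(n-1)-(k-3)(k-4)a(n-2)$, exactly as in Theorem~\ref{t3}), then show that otherwise $n$ and $n-1$ must be adjacent and count these via the collapsed $(n-1)$-permutation whose maximum lies in $[i,n-k+i+1]$, giving $2\bigl(a(n-1)-(k-3)a(n-2)\bigr)$. Your only deviation is cosmetic but welcome: identifying $\pi\setminus n=\pi\setminus(n-1)$ when $n,n-1$ are adjacent makes the descent of an occurrence to $\pi'$ immediate, where the paper instead argues by substituting $n-1$ for $n$ in an occurrence.
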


\begin{proof} Let $\pi=\pi_1\cdots\pi_n\in S_n$ and $n=\pi_s$ and $n-1=\pi_t$ for some $s,t\in [n]$. Further, we let $C:=[i-1]\cup [n-k+i+3,n]$. Similarly to the proof of Theorem~\ref{t3}, if $n$ or $n-1$ occurs in $\pi$ in a position from $C$ then it cannot be a part of $p$ and hence there are $2(k-3)\cdot a(n-1)-(k-3)(k-4)\cdot a(n-2)$ such $p$-avoiding $n$-permutations.  Assume $s,t\not\in C$. Then $s=t\pm 1$ since otherwise $\pi$ contains $p$. Let $\pi'=\pi\setminus n$. We next show that $\pi$ and $\pi'$ are $p$-equivalent. Indeed, assume that $\pi$ contains an occurrence of $p$. Then, this occurrence cannot involve both $n$ and $n-1$ since they are adjacent in $\pi$ and hence cannot correspond to $i$ and $i+2$ in $p$. Therefore, if $n$ is involved in an occurrence of $p$ in $\pi$ then substituting it by $n-1$ gives an occurrence of $p$ in $\pi'$. Note that $\pi'$ has a property that the position of its maximum element  must be in $[i,n-k+i+1]$. Clearly, there are 
$a(n-1)-(k-3)\cdot a(n-2)$ such $p$-avoiding $(n-1)$-permutations $\pi'$, and each of them corresponds to two $p$-avoiding $n$-permutations $\pi$ ($n$ can go straight before or straight after $n-1$). Thus, we have
$$a(n)=2(k-3)\cdot a(n-1)+(k-3)(k-4)\cdot a(n-2)+2[a(n-1)-(k-3)\cdot a(n-2)]$$
$$=2(k-2)\cdot a(n-1)-(k-2)(k-3)\cdot a(n-2),$$
as required.
\end{proof} 

Note that from Theorems~\ref{t3} and~\ref{t4}, all complete bipartite POPs of length 4 with $|A|=2$ are Wilf-equivalent, and their counting sequence begins with 1, 2, 6, 20, 68, 232, 792, 2704, 9232, ... (this is the sequence A006012 in \cite{oeis} with the g.f.\ $\sum_{n\geq 0}a(n)x^n=\frac{1-3x}{1-4x + 2x^2}$ and several combinatorial interpretations, including two related to permutations). As for length 5, all
POPs with $|A|=2$ are also Wilf-equivalent except for $A\in\{\{ 1,4\},\{2,5\}\}$, and these two POPs are Wilf-equivalent by Theorem~\ref{trivial-sym-thm}. In the ``generic case'', the counting sequence begins with 1, 2, 6, 24, 108, 504, 2376, 11232, 53136, ... (the sequence A094433 in \cite{oeis}), and in the ``exceptional cases'', the counting sequence begins with 1, 2, 6, 24, 108, 504, 2364, 11052, 51456, ... .

The following theorem completes enumeration of complete bipartite POPs of length~5, namely, it enumerates avoidance for the POP $(\{1,4\},\{2,3,5\})=$\hspace{-0.4cm}\begin{minipage}[c]{3.5em}\scalebox{1}{
\begin{tikzpicture}[scale=0.5]

\draw [line width=1](0,-0.5)--(0.5,0.5);
\draw [line width=1](1.2,-0.5)--(0.5,0.5);
\draw [line width=1](2.5,-0.5)--(0.5,0.5);

\draw [line width=1](0,-0.5)--(2,0.5);
\draw [line width=1](1.2,-0.5)--(2,0.5);
\draw [line width=1](2.5,-0.5)--(2,0.5);

\draw (0,-0.5) node [scale=0.4, circle, draw,fill=black]{};
\draw (0.5,0.5) node [scale=0.4, circle, draw,fill=black]{};
\draw (1.2,-0.5) node [scale=0.4, circle, draw,fill=black]{};
\draw (2,0.5) node [scale=0.4, circle, draw,fill=black]{};
\draw (2.5,-0.5) node [scale=0.4, circle, draw,fill=black]{};

\node [left] at (0.5,0.6){${\small 1}$};
\node [left] at (0,-0.6){${\small 2}$};
\node [right] at (2,0.6){${\small 4}$};
\node [right] at (1.2,-0.6){${\small 3}$};
\node [right] at (2.5,-0.6){${\small 5}$};

\end{tikzpicture}
}\end{minipage}\hspace{0.7cm} (equivalently, $(\{2,5\},\{1,3,4\})=$\hspace{-0.4cm}\begin{minipage}[c]{3.5em}\scalebox{1}{
\begin{tikzpicture}[scale=0.5]

\draw [line width=1](0,-0.5)--(0.5,0.5);
\draw [line width=1](1.2,-0.5)--(0.5,0.5);
\draw [line width=1](2.5,-0.5)--(0.5,0.5);

\draw [line width=1](0,-0.5)--(2,0.5);
\draw [line width=1](1.2,-0.5)--(2,0.5);
\draw [line width=1](2.5,-0.5)--(2,0.5);

\draw (0,-0.5) node [scale=0.4, circle, draw,fill=black]{};
\draw (0.5,0.5) node [scale=0.4, circle, draw,fill=black]{};
\draw (1.2,-0.5) node [scale=0.4, circle, draw,fill=black]{};
\draw (2,0.5) node [scale=0.4, circle, draw,fill=black]{};
\draw (2.5,-0.5) node [scale=0.4, circle, draw,fill=black]{};

\node [left] at (0.5,0.6){${\small 2}$};
\node [left] at (0,-0.6){${\small 1}$};
\node [right] at (2,0.6){${\small 5}$};
\node [right] at (1.2,-0.6){${\small 3}$};
\node [right] at (2.5,-0.6){${\small 4}$};

\end{tikzpicture}
}\end{minipage}\hspace{0.8cm}).

\begin{thm}\label{long-answer} Let $p$ be the POP $(\{1,4\},\{2,3,5\})=$\hspace{-0.4cm}\begin{minipage}[c]{3.5em}\scalebox{1}{
\begin{tikzpicture}[scale=0.5]

\draw [line width=1](0,-0.5)--(0.5,0.5);
\draw [line width=1](1.2,-0.5)--(0.5,0.5);
\draw [line width=1](2.5,-0.5)--(0.5,0.5);

\draw [line width=1](0,-0.5)--(2,0.5);
\draw [line width=1](1.2,-0.5)--(2,0.5);
\draw [line width=1](2.5,-0.5)--(2,0.5);

\draw (0,-0.5) node [scale=0.4, circle, draw,fill=black]{};
\draw (0.5,0.5) node [scale=0.4, circle, draw,fill=black]{};
\draw (1.2,-0.5) node [scale=0.4, circle, draw,fill=black]{};
\draw (2,0.5) node [scale=0.4, circle, draw,fill=black]{};
\draw (2.5,-0.5) node [scale=0.4, circle, draw,fill=black]{};

\node [left] at (0.5,0.6){${\small 1}$};
\node [left] at (0,-0.6){${\small 2}$};
\node [right] at (2,0.6){${\small 4}$};
\node [right] at (1.2,-0.6){${\small 3}$};
\node [right] at (2.5,-0.6){${\small 5}$};

\end{tikzpicture}
}\end{minipage}\hspace{0.8cm}. Then, $$a(n)=\left\{ \begin{array}{ll} 
n! & \mbox{if }n\leq 4\\ 
7a(n-1)-12a(n-2)+4a(n-3)+2b(n-2)& \mbox{if } n\geq 5,\end{array}\right.$$
where $b(1)=0$, $b(2)=1$ and for $n\geq 3$, 
$$b(n)=a(n-2)+b(n-1)+2\sum_{i=2}^{n-2}b(i).$$
\end{thm}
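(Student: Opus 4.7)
The plan is to adapt the approach of Theorems~\ref{t3} and~\ref{t4}: analyze the positions of the top elements of a $p$-avoiding permutation and use $p$-equivalence to reduce to smaller counts. The starting observation is that $n$, being the maximum, can only occupy labels~$1$ or~$4$ in any occurrence of $p=(\{1,4\},\{2,3,5\})$; structurally, $n$ at position $s$ can play label~$1$ only if $s\le n-4$ (four positions must follow), and label~$4$ only if $4\le s\le n-1$. This partitions the possible positions of $n$ into two overlapping ``safe-for-one-role'' sets $L_1=\{n-3,n-2,n-1,n\}$ and $L_4=\{1,2,3,n\}$, whose union has cardinality $|L_1|+|L_4|-|L_1\cap L_4|=4+4-1=7$ for $n\ge 7$, matching the leading coefficient in the recurrence.

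The first step would carry out an inclusion-exclusion on the positions of $n$ and $n-1$, in the spirit of Theorem~\ref{t4}. For each of the seven boundary positions of $n$ (and similarly for $n-1$), a suitable $p$-equivalence---possibly conditioned on avoidance of the $4$-element ``peak'' POP $q=(\{3\},\{1,2,4\})$ in the tail (or of its dual in the head)---reduces to a $p$-avoiding $(n-1)$-permutation. Summing over the seven positions yields $7a(n-1)$; double-counting the compatible pairs of boundary positions for $n$ and $n-1$ subtracts $12a(n-2)$; and triple overlaps contribute $+4a(n-3)$. The coefficients $12$ and $4$ would arise from counting compatible pairs and triples in $B=L_1\cup L_4$ subject to the constraint that the top elements placed at those positions cannot themselves form an occurrence of $p$.

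The residual $2b(n-2)$ term is expected to account for the configurations escaping this inclusion-exclusion: those $p$-avoiding permutations in which neither $n$ nor $n-1$ lies in a boundary position allowing a clean reduction, but which nonetheless avoid $p$. The left-right symmetry between the initial segment $\{1,2,3\}$ and the final segment $\{n-3,n-2,n-1\}$ splits these exceptional configurations into two mirror-image classes each of size $b(n-2)$, where $b(n)$ is an auxiliary sequence enumerating a restricted subclass of $p$-avoiding $n$-permutations (to be made precise by the step~3 analysis). Applying the same positional case analysis to this restricted class should then yield the recurrence $b(n)=a(n-2)+b(n-1)+2\sum_{i=2}^{n-2}b(i)$: the summand $a(n-2)$ corresponds to an unrestricted reduction, $b(n-1)$ to a shorter restricted reduction, and the doubly-weighted intermediate summands $b(i)$ to a symmetric family of reductions indexed by a pivotal inner position $i$. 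The initial values $b(1)=0$ and $b(2)=1$ would be confirmed by direct inspection.

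The principal obstacle is the third step: isolating precisely the exceptional class surviving the inclusion-exclusion and identifying it with the sequence $b$. One must give a concrete structural description of the class counted by $b(n)$ and then verify its recurrence by an independent but parallel positional case analysis. Once this identification is made, the remainder of the argument reduces to the standard inclusion-exclusion bookkeeping on the positions of $n$, $n-1$, and possibly $n-2$ within the boundary region $B$, as in the proofs of Theorems~\ref{t3} and~\ref{t4}.
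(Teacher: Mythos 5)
Your plan rests on a premise that fails for this pattern. In Theorems~\ref{t3} and~\ref{t4} the clean reductions work because a large element placed in the boundary set $C$ cannot take part in \emph{any} occurrence of $p$, so its removal is genuinely $p$-equivalent. For $p=(\{1,4\},\{2,3,5\})$ the window of positions where $n$ could play role {\boldmath $1$} is $[1,n-4]$ and the window for role {\boldmath $4$} is $[4,n-1]$; their union is all of $[1,n-1]$ once $n\geq 7$, so the only position where $n$ is excluded from every occurrence is the last one. Your sets $L_1$ and $L_4$ are only ``safe for one role'': an element of $\pi$ sitting at, say, position $n-2$ can still play role {\boldmath $4$}, so deleting it is not a $p$-equivalence (insertion back can create occurrences), and the claimed contribution $7a(n-1)$, as well as the pair/triple corrections $-12a(n-2)+4a(n-3)$, do not follow from any inclusion-exclusion of this kind. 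The coefficient $7$ in the statement is an artifact of algebraic elimination, not of seven removable positions. Likewise, the factor $2$ in $2b(n-2)$ does not come from a left-right mirror symmetry (the pattern is not reverse-symmetric); in the paper it comes from the value symmetry swapping $n$ and $n-1$, which both can only play the top roles.

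The missing idea is the structural fact that drives the actual proof: in a $p$-avoiding permutation, if $n$ is not in the last position then $n-1$ must lie at distance $1$ or $2$ from it (otherwise $n-1$ and $n$ serve as the two top roles of an occurrence), and after normalizing by the $n\leftrightarrow n-1$ symmetry one classifies the configurations by this distance and by the position of $n-2$, introducing auxiliary counts (in the paper: $b,c,d,e,f$ for configurations such as $\pi_{n-1}=n$, adjacency $(n-1)n$, the patterns $(n-1)\,\ast\,n$ at the end or in the middle, etc.) and solving the resulting system. Your third step explicitly defers exactly this identification of the exceptional class and of the sequence $b$, but that is the entire substance of the theorem; without it, and with the inclusion-exclusion skeleton resting on the false ``safe position'' premise, the proposal does not constitute a proof and its first two steps would not survive being made precise.
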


\begin{proof}  For $p$-avoiding permutations $\pi_1\cdots\pi_n$, we let
\begin{itemize}
\item $b(n)$ (resp., $b'(n)$) be the number of $n$-permutations with $\pi_{n-1}=n$ (resp., $\pi_{n-1}=n-1$); 
\item $c(n)$ (resp., $c'(n)$) be the number of $n$-permutations with  $\pi_{i-1}=n-1$ and $\pi_i=n$ (resp., $\pi_{i-1}=n$ and $\pi_i=n-1$) for $2\leq i<n$;
\item $d(n)$ (resp., $d'(n)$) be the number of $n$-permutations with  $\pi_{n-3}=n-1$ and $\pi_{n-1}=n$ (resp., $\pi_{n-3}=n$ and $\pi_{n-1}=n-1$); 
\item $e(n)$ (resp., $e'(n)$) be the number of  $n$-permutations with $\pi_{i-2}=n-1$ and $\pi_i=n$ (resp., $\pi_{i-2}=n$ and $\pi_i=n-1$) for $3\leq i<n-1$; 
\item $f(n)$ (resp., $f'(n)$) be the number of $n$-permutations with $\pi_n\neq n$ (resp., $\pi_n\neq n-1$). 
\end{itemize}

\noindent
Clearly, for $n<5$, $a(n)=n!$, $f(n)=n!-(n-1)!$, $b(n)=(n-1)!$ except for $b(1)=0$. Also, for $n<3$, $c(n)=d(n)=e(n)=0$. Moreover, $p$ is symmetric with respect to $n$ and $n-1$ meaning that swapping $n$ and $n-1$ neither introduces an extra occurrence of $p$ nor removes an existent occurrence of $p$ (both $n$ and $n-1$ can only play the role of $1$ or $4$ in $p$).  Hence, $b'(n)=b(n)$, $c'(n)=c(n)$, $d'(n)=d(n)$, $e'(n)=e(n)$ and $f'(n)=f(n)$, and therefore in what follows we can assume that $n$ is to the right of $n-1$, and at the end we multiply the result by 2.

Since, clearly, there are $a(n-1)$ $n$-permutations $\pi_1\cdots\pi_n$ with $\pi_n=n$, we have $$f(n)=a(n)-a(n-1).$$ If $\pi_i= n$ for $i<n$ then either $\pi_{i-1}=n-1$ or $\pi_{i-2}=n-1$ (or else there is an occurrence of $p$ involving $n-1$ and $n$). In the latter case, we consider separately the case when $\pi_{n-1}=n$. Keeping in mind doubling the result, we obtain
\begin{equation}\label{long-formula-for-an}
a(n)=2(a(n-1)+c(n)+d(n)+e(n)).
\end{equation} 
We now compute $c(n)$. Let $\pi$ be an $n$-permutation counted by $c(n)$. Then, clearly, removing $n$ from $\pi$ gives a $p$-equivalent permutation. Hence, 
\begin{equation}\label{formula-cn}
c(n)=f(n-1)=a(n-1)-a(n-2).
\end{equation} 
Let us compute $e(n)$. If $\pi$ is an $n$-permutation counted by $e(n)$ then to the right of $n$ in $\pi$ there are at least two elements. Consider possible positions $j$ for $n-2$. The case of $j<i-3$ is not possible because then $\pi$ is not $p$-avoiding ($n-2$ and $n$ play the roles of $1$ and $4$ in $p$, respectively). The case of $i+1<j<n$ is not possible either as then $\pi$ is not $p$-avoiding ($n-1$ and $n-2$ play the roles of $1$ and $4$ in $p$, respectively). This leaves us with four possible cases for $j$ to consider.
\begin{itemize}
\item $j=i-3$. In this case, $\pi$ is $p$-equivalent to the permutation $\pi'$ obtained from $\pi$ by removing $n-2$ and reducing $n-1$ and $n$ by 1. Indeed, $n-2$ cannot play the role of $2,3,5$ in an occurrence of $p$ as otherwise $n-1$ and $n$ must play the roles of $1$ and $4$ but there is only one element in $\pi$ between them. On the other hand, if $n-2$ plays the role of $1$ or $4$ in an occurrence of $p$ in $\pi$, then the element $n-2$ in $\pi'$ (the former element $n-1$ in $\pi$) plays the same role in an occurrence of $p$ in $\pi'$. Hence, there are $e(n-1)$ such permutations. 
\item $j=i-1$. In this case, $\pi$ is $p$-equivalent to the permutation $\pi'$ obtained from $\pi$ by removing $n-1$ and $n$ since only one of the consecutive elements $n-1$, $n-2$, $n$ can be involved in an occurrence of $p$. Since in $\pi'$ the maximum element $n-2$ is not in the last two positions, we have $f(n-2)-b(n-2)$  such permutations. 
\item $j=i+1$ or $j=n$. In both cases, $\pi$ is $p$-equivalent to the permutation $\pi'$ obtained from $\pi$ by removing $n-2$ and reducing $n-1$ and $n$ by 1. In $\pi'$, there is one element between the maximum elements $n-2$ and $n-1$, and the element $n-1$ is not the rightmost element. Hence, in each of the two cases, the number of permutations is $d(n-1)+e(n-1)$. 
\end{itemize}  
Summarizing the four cases, we obtain
$$e(n)=3e(n-1)+2d(n-1)+a(n-2)-a(n-3)-b(n-2).$$
Next, we compute $d(n)$. Let $\pi$ be an $n$-permutation counted by $d(n)$. Similarly to our considerations in the case of $e(n)$, we have three possibilities fo position $j$ of the element $n-2$.
\begin{itemize}
\item $j=n-4$. Removing $n-2$ results in a $p$-equivalent permutation. Hence, there are $d(n-1)$ permutations in this case.
\item $j=n-2$. Removing $n-1$ and $n$ results in a $p$-equivalent permutation. Hence, there are $b(n-2)$ permutations in this case.
\item $j=n$. Removing $n-2$ and $n$ results in a $p$-equivalent permutation. Hence, there are $b(n-2)$ permutations in this case.
\end{itemize} 
Summarizing the three cases, we obtain
$$d(n)=2b(n-2)+d(n-1).$$
Finally, we compute $b(n)$. Let $\pi$ be an $n$-permutation counted by $b(n)$ and $\pi_j=n-1$. Clearly, to avoid $p$ we must have $j>n-4$. If $j=n$ then $\pi$ is $p$-equivalent to the permutation obtained by removing $n-1$ and $n$ from $\pi$, and there are $a(n-2)$ such permutations. If $j=n-2$ then $\pi$ is $p$-equivalent to the permutation obtained by removing $n$ from $\pi$, and there are $b(n-1)$ such permutations. Finally, if $j=n-3$ then by definition, there are $d(n)$ such permutations. Hence,
$$b(n)=a(n-2)+b(n-1)+d(n).$$
As the final step, we simplify the obtained formulas.
$$d(n)=2b(n-2)+d(n-1) = 2b(n-2)+2b(n-3)+d(n-2) = \cdots = 2\sum_{i=2}^{n-2} b(i)$$
since $b(1)=0$. But then
$$b(n)=a(n-2)+b(n-1)+d(n) = a(n-2)+b(n-1)+ 2\sum_{i=2}^{n-2} b(i).$$
From (\ref{long-formula-for-an}) and (\ref{formula-cn}), $a(n)=2[2a(n-1)-a(n-2)+d(n)+e(n)]$ and hence $d(n)+e(n)=a(n)/2-2a(n-1)+a(n-2)$. On the other hand, 
$$d(n)+e(n)= 2b(n-2)+d(n-1) + 3e(n-1)+2d(n-1)+a(n-2)-a(n-3)-b(n-2) $$
$$= b(n-2)+a(n-2)-a(n-3)+3(d(n-1)+e(n-1))$$
$$= b(n-2)+a(n-2)-a(n-3)+3(a(n-1)/2-2a(n-2)+a(n-3))$$ 
$$=  b(n-2)+3a(n-1)/2-5a(n-2)+2a(n-3).$$ Inserting the last expression for $d(n)+e(n)$ into (\ref{long-formula-for-an}), we obtain the desired result:
$$a(n) = 2(2a(n-1)-a(n-2)+(b(n-2)+3a(n-1)/2-5a(n-2)+2a(n-3)))$$ 
$$= 7a(n-1)-12a(n-2)+4a(n-3)+2b(n-2).$$
\end{proof}

\section{Wilf-classification for N-pattern avoidance}\label{WC-N-patterns-sec}

Note that the minimal connected bipartite graph that is not a complete bipartite graph is a $4$-path. Due to Theorem~\ref{trivial-sym-thm}, we may assume that the
corresponding poset defining a bipartite POP of length 4 has the N-shape, and we call such POPs {\em N-patterns}. The following two theorems were obtained in \cite{GK19}.

\begin{thm}[\cite{GK19}]\label{thm-12} 
For the N-pattern \hspace{-3.5mm}
\begin{minipage}[c]{3.8em}\scalebox{1}{
\begin{tikzpicture}[scale=0.3]
\draw [line width=1](0,0)--(0,1)--(1,0)--(1,1);
\draw (0,0) node [scale=0.3, circle, draw,fill=black]{};
\draw (0,1) node [scale=0.3, circle, draw,fill=black]{};
\draw (1,0) node [scale=0.3, circle, draw,fill=black]{};
\draw (1,1) node [scale=0.3, circle, draw,fill=black]{};
\node [left] at (0,-0.1){\small$2$};
\node [left] at (0,1.1){\small$1$};
\node [right] at (1,-0.1){\small$3$};
\node [right] at (1,1.1){\small$4$};
\end{tikzpicture}
}\end{minipage}
\hspace{-0.4cm}, $a(0)=a(1)=1$ and, for $n\geq 2$, $a(n) = 4a(n-1) - 3a(n-2) + 1$, so that 
\begin{equation}\label{thm-12-formula}
a(n)=\frac{3^n-2n+3}{4}.\nonumber
\end{equation}
Also, $$\sum_{n\geq 0}a(n)x^n=\frac{(1-2x)^2}{(1-3x)(1-x)^2}.$$
\end{thm}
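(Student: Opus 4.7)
The plan is an insertion analysis on the maximum element $n$. Given $\sigma\in S_{n-1}(p)$ and a position $k\in\{1,\dots,n\}$, form $\pi$ by inserting $n$ at position $k$. Since $n$ is maximal, in any occurrence of the N-pattern it can only play the role of label $1$ (top-left) or label $4$ (top-right). A direct case analysis produces two local conditions on $\sigma$: (A) $\sigma_{k+1}\sigma_{k+2}\cdots\sigma_{n-1}$ is strictly decreasing (otherwise $n$ plus an ascent to its right gives an N with $n$ at label $1$); and (B) the prefix $\sigma_1\sigma_2\cdots\sigma_{k-1}$ avoids both classical patterns $312$ and $321$ (otherwise a left peak together with $n$ gives an N with $n$ at label $4$). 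When $\sigma$ avoids $p$ and (A), (B) hold, no occurrence of $p$ involving $n$ is created. Hence $a(n)=\sum_{\sigma\in S_{n-1}(p)}|K(\sigma)|$, where $K(\sigma)$ is the set of positions $k$ satisfying both (A) and (B).

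The second step is a structural lemma: if $\sigma\in S_{n-1}(p)$ contains $312$ or $321$ at positions $i_1<i_2<i_3$, then $\sigma_{i_3}\sigma_{i_3+1}\cdots\sigma_{n-1}$ is strictly decreasing. First, $\sigma_{i_3}$ must exceed every $\sigma_j$ with $j>i_3$ (otherwise $(i_1,i_2,i_3,j)$ is an N-occurrence); second, any ascent $(\ell,\ell+1)$ with $\ell>i_3$ combines with $\sigma_{i_1},\sigma_{i_2}$ to give an N-occurrence at $(i_1,i_2,\ell,\ell+1)$, using $\sigma_{i_1}>\sigma_{i_3}>\sigma_\ell$. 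Introducing
\[
P(\sigma)=\max\{\,j:\sigma_1\cdots\sigma_j \text{ avoids } 312 \text{ and } 321\,\},\qquad D(\sigma)=\min\{\,j:\sigma_j\cdots\sigma_{n-1} \text{ is decreasing}\,\},
\]
the conditions (A) and (B) translate to $D(\sigma)-1\le k\le P(\sigma)+1$, so $|K(\sigma)|=P(\sigma)-D(\sigma)+3$ in the generic case. The lemma ensures $D\le P+1$ whenever $P<n-1$, so the interval of valid $k$ is always non-empty; the unique strictly-decreasing $\sigma$ (with $D=1$ and $P=2$ for $n\ge 4$) is handled separately and contributes $|K|=P+1=3$.

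Assembling the pieces, for $n\ge 4$ one gets $a(n)=3+\sum_{D\ge 2}(P-D+3)=3a(n-1)+\sum_{D\ge 2}\bigl(P(\sigma)-D(\sigma)\bigr)$, and the remaining task is to show
\[
\sum_{\sigma\in S_{n-1}(p),\ D(\sigma)\ge 2}\bigl(P(\sigma)-D(\sigma)\bigr)=n-3.
\]
I would split this sum by whether $P(\sigma)=n-1$ (i.e., $\sigma\in S_{n-1}(312,321)$, a set of cardinality $2^{n-2}$ whose elements admit a transparent binary encoding that makes the distribution of $D$ explicit) or $P(\sigma)<n-1$ (in which case the structural lemma forces $\sigma$ to decompose as a $\{312,321\}$-avoiding prefix of length $P$ followed by a strictly decreasing suffix, giving a clean recursive enumeration). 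The two partial sums can be evaluated and combined to yield $n-3$, giving the recurrence $a(n)=3a(n-1)+(n-3)$, equivalent to $a(n)=4a(n-1)-3a(n-2)+1$. Solving the recurrence (characteristic roots $1$ and $3$, with a linear particular solution) and summing yields $a(n)=(3^n-2n+3)/4$ and the stated g.f. The main obstacle is the bookkeeping of the split sum: the partial contributions nearly cancel, and a careful matching between the $D$-distribution on $\{312,321\}$-avoiders and the decreasing-tail counts on the complement is required to extract exactly the value $n-3$.
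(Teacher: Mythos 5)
Your insertion framework is sound: the observation that $n$ can only play the role of label $1$ or $4$, the translation into conditions (A) and (B), the structural lemma forcing a decreasing tail after any occurrence of $312$ or $321$, and the resulting formula $|K(\sigma)|=P(\sigma)-D(\sigma)+3$ for $D\geq 2$ (with the decreasing permutation contributing $3$) are all correct; I checked that the bookkeeping gives $a(4)=19$ and $a(5)=59$, and that $a(n)=3a(n-1)+(n-3)$ is indeed equivalent to the stated recurrence, closed form and generating function. Note that the paper itself does not prove this theorem (it is quoted from \cite{GK19}), so the only basis for comparison is the paper's proofs of the same recurrence for other patterns in Wilf class 1 (Theorems~\ref{thm-two-N-patterns}--\ref{former-conj-1}), which proceed quite differently, via auxiliary counting sequences satisfying their own first-order recurrences (e.g.\ $c(n)=2c(n-1)+a(n-2)$ together with an induction showing $c(n)=a(n)-2a(n-1)+1$).

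The genuine gap is the identity $\sum_{\sigma\in S_{n-1}(p),\,D(\sigma)\ge 2}\bigl(P(\sigma)-D(\sigma)\bigr)=n-3$, which you state as ``the remaining task'' and only sketch. This is not a routine bookkeeping step: it is the entire enumerative content of the theorem, since once your reduction is in place the identity is equivalent to the recurrence itself. Your proposed split makes the difficulty visible rather than resolving it: numerically, the sum over $\{312,321\}$-avoiders ($P=n-2$) appears to be $2^{n-3}$ (it is $2$ for $n=4$ and $4$ for $n=5$), while the sum over the remaining $p$-avoiders is negative of roughly the same size, so two exponentially large quantities must be computed exactly and must cancel down to the linear term $n-3$. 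Neither partial sum is evaluated: you do not derive the distribution of $D$ on $S_{n-1}(312,321)$ from the claimed binary encoding, nor do you establish the ``clean recursive enumeration'' of the $p$-avoiders with $P<n-1$ (for these one must also re-impose $p$-avoidance on the glued prefix/decreasing-suffix structure, which is an enumeration problem of comparable difficulty to the original one). As written, the proof is therefore incomplete at its decisive step. A more economical way to close it, in the spirit of the paper's proofs for this Wilf class, would be to introduce an auxiliary quantity such as $S(n)=\sum_{D\ge 2}(P-D)$ (or a refinement of your insertion count) and prove a first-order recurrence like $S(n)=S(n-1)+1$ directly, rather than evaluating the two large partial sums separately.
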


\begin{thm}[\cite{GK19}]\label{thm-11} 
For the N-pattern $p=$ \hspace{-3.5mm}
\begin{minipage}[c]{3.8em}\scalebox{1}{
\begin{tikzpicture}[scale=0.3]
\draw [line width=1](0,0)--(0,1)--(1,0)--(1,1);
\draw (0,0) node [scale=0.3, circle, draw,fill=black]{};
\draw (0,1) node [scale=0.3, circle, draw,fill=black]{};
\draw (1,0) node [scale=0.3, circle, draw,fill=black]{};
\draw (1,1) node [scale=0.3, circle, draw,fill=black]{};
\node [left] at (0,-0.1){\small$3$};
\node [left] at (0,1.1){\small$1$};
\node [right] at (1,-0.1){\small$2$};
\node [right] at (1,1.1){\small$4$};
\end{tikzpicture}
}\end{minipage}
we have $a(0)=a(1)=1$ and, for $n\geq 2$, $a(n) = 4a(n-1) - 3a(n-2) + a(n-3)$, so that, for $n\geq 1$, 
\begin{equation}\label{thm-11-formula}
a(n)=\sum_{i=0}^{n-1} {n+2i-1\choose 3i}.\nonumber
\end{equation}
Also, $$\sum_{n\geq 0}a(n)x^n= \frac{1-3x+x^2}{1-4x+3x^2-x^3}.$$
\end{thm}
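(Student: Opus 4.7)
My approach is to prove the linear recurrence $a(n)=4a(n-1)-3a(n-2)+a(n-3)$ by a structural case analysis in the spirit of the $p$-equivalence technique of Theorem~\ref{t3}, and then derive the generating function and closed form by routine algebra. For the recurrence, fix a $p$-avoiding $\pi\in S_n$ with $\pi_s=n$, and observe that since $n$ is maximal it can only play the role of label~1 (possible iff $s\le n-3$) or label~4 (possible iff $s\ge 4$) in an occurrence of $p$. The plan is to classify $S_n(p)$ by $s$ into the outer range $s\in\{1,2,3\}\cup\{n-2,n-1,n\}$, in which $n$ can play at most one role, and the inner range $s\in\{4,\ldots,n-3\}$, in which $n$ may play either.

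In the outer range, one identifies the positions of $n$ at which $\pi$ is $p$-equivalent to $\pi\setminus n$, and applies the same inclusion-exclusion over the joint positions of $\{n,n-1\}$ as in Theorem~\ref{t3}, producing a contribution of the form $c\cdot a(n-1)-c'\cdot a(n-2)$. In the inner range, deletion of $n$ is not automatically $p$-equivalent, so the refinement is by the position of $n-1$; most sub-configurations still collapse to $p$-equivalent $(n-1)$- or $(n-2)$-permutations, but a residual collection of ``forced'' configurations involving the joint adjacency of $n$, $n-1$ and $n-2$ contributes the extra $+a(n-3)$ term. Summing contributions and simplifying yields the three-term recurrence.

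With the base values $a(0)=a(1)=1$, $a(2)=2$, $a(3)=6$ verified by inspection, the recurrence immediately implies
\[
\sum_{n\ge 0}a(n)x^n \;=\; \frac{1-3x+x^2}{1-4x+3x^2-x^3}.
\]
For the closed form, observe that $1-4x+3x^2-x^3=(1-x)^3-x$, so
\[
\sum_{n\ge 1}a(n)x^n \;=\; \frac{x(1-x)^2}{(1-x)^3-x} \;=\; \sum_{i\ge 0}\frac{x^{i+1}}{(1-x)^{3i+1}},
\]
obtained by a geometric expansion in $x/(1-x)^3$ and cancellation of the factor $(1-x)^2$. Applying $1/(1-x)^{3i+1}=\sum_{m\ge 0}\binom{m+3i}{3i}x^m$ and extracting the coefficient of $x^n$ with $m=n-i-1$ yields $a(n)=\sum_{i=0}^{n-1}\binom{n+2i-1}{3i}$ for $n\ge 1$.

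The main obstacle is the case analysis for the recurrence. Unlike the complete bipartite situation of Theorem~\ref{t3}, the N-pattern is asymmetric (label~3 is connected only to label~1, while label~2 is connected to both labels~1 and~4), so the maximum element can realize two distinct roles in an occurrence of $p$ and straightforward deletion of $n$ does not uniformly preserve $p$-equivalence. Pinpointing exactly which positional configurations of $n$, $n-1$ and $n-2$ produce the residual $+a(n-3)$ correction---while avoiding double counting under the inclusion-exclusion on $\{n,n-1\}$---is the delicate combinatorial step.
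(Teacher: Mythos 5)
Your algebraic half is fine: granting the recurrence and the initial values $1,1,2,6$, the generating function follows, the factorisation $1-4x+3x^2-x^3=(1-x)^3-x$ is correct, and your expansion of $x(1-x)^2/((1-x)^3-x)$ as $\sum_{i\ge 0}x^{i+1}/(1-x)^{3i+1}$ together with the coefficient extraction does give $a(n)=\sum_{i=0}^{n-1}\binom{n+2i-1}{3i}$. But the heart of the statement is the recurrence $a(n)=4a(n-1)-3a(n-2)+a(n-3)$, and for it you offer only a plan, explicitly deferring what you yourself call the delicate combinatorial step. Nothing in the proposal establishes that recurrence, so the proof is incomplete. (For context, the present paper does not prove this theorem either; it is quoted from \cite{GK19}, so there is no internal proof to compare against -- the burden of the case analysis is entirely on you.)

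Moreover, the sketch rests on a false premise. An occurrence $\pi_{i_1}\pi_{i_2}\pi_{i_3}\pi_{i_4}$ of this pattern requires $\pi_{i_2},\pi_{i_3}<\pi_{i_1}$ and $\pi_{i_2}<\pi_{i_4}$, and you correctly note that $n$ can only serve as the first entry ($s\le n-3$) or the last entry ($s\ge 4$) of an occurrence; but for $n\ge 6$ every position $s$ satisfies one of these, so, unlike in Theorems~\ref{t3} and~\ref{t4}, there is no position of $n$ from which deleting $n$ is automatically $p$-equivalent. In particular, the claim that the outer range $s\in\{1,2,3\}\cup\{n-2,n-1,n\}$ contributes $c\cdot a(n-1)-c'\cdot a(n-2)$ via the inclusion--exclusion of Theorem~\ref{t3} is wrong: $3214$ contains $p$ (as the whole word) while $321$ does not, and $4132$ contains $p$ while $132$ does not, so deletion of $n$ at the extreme positions does not preserve $p$-equivalence. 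Indeed, the avoiders with $\pi_n=n$ are exactly those whose first $n-1$ entries avoid both $p$ and the three-element POP in which the first chosen element must exceed the two later ones, a strictly smaller class than $S_{n-1}(p)$, so that position contributes something other than $a(n-1)$. A correct derivation must introduce and recursively count auxiliary families of avoiders with structural restrictions on their prefix or suffix (in the style of the proofs of Theorems~\ref{thm-two-N-patterns}--\ref{thm-new-wilf-equiv}), rather than sorting by the positions of $n$, $n-1$, $n-2$ alone; the residual $+a(n-3)$ term cannot simply be asserted to come from adjacency configurations of these three elements.
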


Reading the labels of an N-pattern through the path starting from the left lower vertex to the right upper vertex, we can encode such a pattern by a 4-permutation, called {\em N-word}, that we record in bold to distinguish from a classical pattern. For example, the N-pattern \hspace{-3.5mm}
\begin{minipage}[c]{3.8em}\scalebox{1}{
\begin{tikzpicture}[scale=0.3]
\draw [line width=1](0,0)--(0,1)--(1,0)--(1,1);
\draw (0,0) node [scale=0.3, circle, draw,fill=black]{};
\draw (0,1) node [scale=0.3, circle, draw,fill=black]{};
\draw (1,0) node [scale=0.3, circle, draw,fill=black]{};
\draw (1,1) node [scale=0.3, circle, draw,fill=black]{};
\node [left] at (0,-0.1){\small$3$};
\node [left] at (0,1.1){\small$1$};
\node [right] at (1,-0.1){\small$2$};
\node [right] at (1,1.1){\small$4$};
\end{tikzpicture}
}\end{minipage} is encoded by the N-word {\bf 3124}. In this section, for the reader's convenience we indicate elements of posets in bold in order to distinguish them from elements of permutations.

\begin{table}
\begin{tabular}{c||c}
{\bf N-patterns} & {\bf Reference} \\
\hline
\multicolumn{2}{l}{{\bf Wilf-class 1} given by the recurence $a(n)=4a(n-1)-3a(n-2)+1$.} \\
\multicolumn{2}{l}{This is the sequence $A111277$ in \cite{oeis} ($1, 2, 6, 19, 59, 180, 544, 1637,4917,\ldots$).}\\
\hline
\hspace{-3.5mm}
\begin{minipage}[c]{3.8em}\scalebox{1}{
\begin{tikzpicture}[scale=0.3]
\draw [line width=1](0,0)--(0,1)--(1,0)--(1,1);
\draw (0,0) node [scale=0.3, circle, draw,fill=black]{};
\draw (0,1) node [scale=0.3, circle, draw,fill=black]{};
\draw (1,0) node [scale=0.3, circle, draw,fill=black]{};
\draw (1,1) node [scale=0.3, circle, draw,fill=black]{};
\node [left] at (0,-0.1){\small$2$};
\node [left] at (0,1.1){\small$1$};
\node [right] at (1,-0.1){\small$3$};
\node [right] at (1,1.1){\small$4$};
\end{tikzpicture}
}\end{minipage}
\hspace{-0.4cm},  \hspace{-3.5mm}
\begin{minipage}[c]{3.8em}\scalebox{1}{
\begin{tikzpicture}[scale=0.3]
\draw [line width=1](0,0)--(0,1)--(1,0)--(1,1);
\draw (0,0) node [scale=0.3, circle, draw,fill=black]{};
\draw (0,1) node [scale=0.3, circle, draw,fill=black]{};
\draw (1,0) node [scale=0.3, circle, draw,fill=black]{};
\draw (1,1) node [scale=0.3, circle, draw,fill=black]{};
\node [left] at (0,-0.1){\small$3$};
\node [left] at (0,1.1){\small$4$};
\node [right] at (1,-0.1){\small$2$};
\node [right] at (1,1.1){\small$1$};
\end{tikzpicture}
}\end{minipage}
\hspace{-0.4cm}, \hspace{-3.5mm}
\begin{minipage}[c]{3.8em}\scalebox{1}{
\begin{tikzpicture}[scale=0.3]
\draw [line width=1](0,0)--(0,1)--(1,0)--(1,1);
\draw (0,0) node [scale=0.3, circle, draw,fill=black]{};
\draw (0,1) node [scale=0.3, circle, draw,fill=black]{};
\draw (1,0) node [scale=0.3, circle, draw,fill=black]{};
\draw (1,1) node [scale=0.3, circle, draw,fill=black]{};
\node [left] at (0,-0.1){\small$1$};
\node [left] at (0,1.1){\small$2$};
\node [right] at (1,-0.1){\small$4$};
\node [right] at (1,1.1){\small$3$};
\end{tikzpicture}
}\end{minipage}
\hspace{-0.4cm}, \hspace{-3.5mm}
\begin{minipage}[c]{3.8em}\scalebox{1}{
\begin{tikzpicture}[scale=0.3]
\draw [line width=1](0,0)--(0,1)--(1,0)--(1,1);
\draw (0,0) node [scale=0.3, circle, draw,fill=black]{};
\draw (0,1) node [scale=0.3, circle, draw,fill=black]{};
\draw (1,0) node [scale=0.3, circle, draw,fill=black]{};
\draw (1,1) node [scale=0.3, circle, draw,fill=black]{};
\node [left] at (0,-0.1){\small$4$};
\node [left] at (0,1.1){\small$3$};
\node [right] at (1,-0.1){\small$1$};
\node [right] at (1,1.1){\small$2$};
\end{tikzpicture}
}\end{minipage}
\hspace{-0.4cm} & Theorem~\ref{thm-12}\\
\hline
\hspace{-3.5mm}
\begin{minipage}[c]{3.8em}\scalebox{1}{
\begin{tikzpicture}[scale=0.3]
\draw [line width=1](0,0)--(0,1)--(1,0)--(1,1);
\draw (0,0) node [scale=0.3, circle, draw,fill=black]{};
\draw (0,1) node [scale=0.3, circle, draw,fill=black]{};
\draw (1,0) node [scale=0.3, circle, draw,fill=black]{};
\draw (1,1) node [scale=0.3, circle, draw,fill=black]{};
\node [left] at (0,-0.1){\small$2$};
\node [left] at (0,1.1){\small$1$};
\node [right] at (1,-0.1){\small$4$};
\node [right] at (1,1.1){\small$3$};
\end{tikzpicture}
}\end{minipage}
\hspace{-0.4cm},  \hspace{-3.5mm}
\begin{minipage}[c]{3.8em}\scalebox{1}{
\begin{tikzpicture}[scale=0.3]
\draw [line width=1](0,0)--(0,1)--(1,0)--(1,1);
\draw (0,0) node [scale=0.3, circle, draw,fill=black]{};
\draw (0,1) node [scale=0.3, circle, draw,fill=black]{};
\draw (1,0) node [scale=0.3, circle, draw,fill=black]{};
\draw (1,1) node [scale=0.3, circle, draw,fill=black]{};
\node [left] at (0,-0.1){\small$3$};
\node [left] at (0,1.1){\small$4$};
\node [right] at (1,-0.1){\small$1$};
\node [right] at (1,1.1){\small$2$};
\end{tikzpicture}
}\end{minipage}
\hspace{-0.4cm} &  Theorem~\ref{thm-two-N-patterns} \\
\hline
\hspace{-3.5mm}
\begin{minipage}[c]{3.8em}\scalebox{1}{
\begin{tikzpicture}[scale=0.3]
\draw [line width=1](0,0)--(0,1)--(1,0)--(1,1);
\draw (0,0) node [scale=0.3, circle, draw,fill=black]{};
\draw (0,1) node [scale=0.3, circle, draw,fill=black]{};
\draw (1,0) node [scale=0.3, circle, draw,fill=black]{};
\draw (1,1) node [scale=0.3, circle, draw,fill=black]{};
\node [left] at (0,-0.1){\small$1$};
\node [left] at (0,1.1){\small$2$};
\node [right] at (1,-0.1){\small$3$};
\node [right] at (1,1.1){\small$4$};
\end{tikzpicture}
}\end{minipage}
\hspace{-0.4cm},  \hspace{-3.5mm}
\begin{minipage}[c]{3.8em}\scalebox{1}{
\begin{tikzpicture}[scale=0.3]
\draw [line width=1](0,0)--(0,1)--(1,0)--(1,1);
\draw (0,0) node [scale=0.3, circle, draw,fill=black]{};
\draw (0,1) node [scale=0.3, circle, draw,fill=black]{};
\draw (1,0) node [scale=0.3, circle, draw,fill=black]{};
\draw (1,1) node [scale=0.3, circle, draw,fill=black]{};
\node [left] at (0,-0.1){\small$4$};
\node [left] at (0,1.1){\small$3$};
\node [right] at (1,-0.1){\small$2$};
\node [right] at (1,1.1){\small$1$};
\end{tikzpicture}
}\end{minipage}
\hspace{-0.4cm} & Theorem~\ref{thm-two-N-patterns-2}\\
\hline
\hspace{-3.5mm}
\begin{minipage}[c]{3.8em}\scalebox{1}{
\begin{tikzpicture}[scale=0.3]
\draw [line width=1](0,0)--(0,1)--(1,0)--(1,1);
\draw (0,0) node [scale=0.3, circle, draw,fill=black]{};
\draw (0,1) node [scale=0.3, circle, draw,fill=black]{};
\draw (1,0) node [scale=0.3, circle, draw,fill=black]{};
\draw (1,1) node [scale=0.3, circle, draw,fill=black]{};
\node [left] at (0,-0.1){\small$1$};
\node [left] at (0,1.1){\small$3$};
\node [right] at (1,-0.1){\small$2$};
\node [right] at (1,1.1){\small$4$};
\end{tikzpicture}
}\end{minipage}
\hspace{-0.4cm},  \hspace{-3.5mm}
\begin{minipage}[c]{3.8em}\scalebox{1}{
\begin{tikzpicture}[scale=0.3]
\draw [line width=1](0,0)--(0,1)--(1,0)--(1,1);
\draw (0,0) node [scale=0.3, circle, draw,fill=black]{};
\draw (0,1) node [scale=0.3, circle, draw,fill=black]{};
\draw (1,0) node [scale=0.3, circle, draw,fill=black]{};
\draw (1,1) node [scale=0.3, circle, draw,fill=black]{};
\node [left] at (0,-0.1){\small$4$};
\node [left] at (0,1.1){\small$2$};
\node [right] at (1,-0.1){\small$3$};
\node [right] at (1,1.1){\small$1$};
\end{tikzpicture}
}\end{minipage}
\hspace{-0.4cm} & Theorem~\ref{thm-N-pattern-1111}\\
\hline
\hspace{-3.5mm}
\begin{minipage}[c]{3.8em}\scalebox{1}{
\begin{tikzpicture}[scale=0.3]
\draw [line width=1](0,0)--(0,1)--(1,0)--(1,1);
\draw (0,0) node [scale=0.3, circle, draw,fill=black]{};
\draw (0,1) node [scale=0.3, circle, draw,fill=black]{};
\draw (1,0) node [scale=0.3, circle, draw,fill=black]{};
\draw (1,1) node [scale=0.3, circle, draw,fill=black]{};
\node [left] at (0,-0.1){\small$1$};
\node [left] at (0,1.1){\small$4$};
\node [right] at (1,-0.1){\small$3$};
\node [right] at (1,1.1){\small$2$};
\end{tikzpicture}
}\end{minipage}
\hspace{-0.4cm},  \hspace{-3.5mm}
\begin{minipage}[c]{3.8em}\scalebox{1}{
\begin{tikzpicture}[scale=0.3]
\draw [line width=1](0,0)--(0,1)--(1,0)--(1,1);
\draw (0,0) node [scale=0.3, circle, draw,fill=black]{};
\draw (0,1) node [scale=0.3, circle, draw,fill=black]{};
\draw (1,0) node [scale=0.3, circle, draw,fill=black]{};
\draw (1,1) node [scale=0.3, circle, draw,fill=black]{};
\node [left] at (0,-0.1){\small$4$};
\node [left] at (0,1.1){\small$1$};
\node [right] at (1,-0.1){\small$2$};
\node [right] at (1,1.1){\small$3$};
\end{tikzpicture}
}\end{minipage}
\hspace{-0.4cm}, \hspace{-3.5mm}
\begin{minipage}[c]{3.8em}\scalebox{1}{
\begin{tikzpicture}[scale=0.3]
\draw [line width=1](0,0)--(0,1)--(1,0)--(1,1);
\draw (0,0) node [scale=0.3, circle, draw,fill=black]{};
\draw (0,1) node [scale=0.3, circle, draw,fill=black]{};
\draw (1,0) node [scale=0.3, circle, draw,fill=black]{};
\draw (1,1) node [scale=0.3, circle, draw,fill=black]{};
\node [left] at (0,-0.1){\small$3$};
\node [left] at (0,1.1){\small$2$};
\node [right] at (1,-0.1){\small$1$};
\node [right] at (1,1.1){\small$4$};
\end{tikzpicture}
}\end{minipage}
\hspace{-0.4cm}, \hspace{-3.5mm}
\begin{minipage}[c]{3.8em}\scalebox{1}{
\begin{tikzpicture}[scale=0.3]
\draw [line width=1](0,0)--(0,1)--(1,0)--(1,1);
\draw (0,0) node [scale=0.3, circle, draw,fill=black]{};
\draw (0,1) node [scale=0.3, circle, draw,fill=black]{};
\draw (1,0) node [scale=0.3, circle, draw,fill=black]{};
\draw (1,1) node [scale=0.3, circle, draw,fill=black]{};
\node [left] at (0,-0.1){\small$2$};
\node [left] at (0,1.1){\small$3$};
\node [right] at (1,-0.1){\small$4$};
\node [right] at (1,1.1){\small$1$};
\end{tikzpicture}
}\end{minipage}
\hspace{-0.4cm} & Theorem~\ref{former-conj-1}\\
\hline
\multicolumn{2}{l}{{\bf Wilf-class 2} given by the recurrence $a(n)=4a(n-1) - 3a(n-2) + a(n-3)$.} \\
\multicolumn{2}{l}{This is the sequence $A052544$ in \cite{oeis} ($1, 2, 6, 19, 60, 189, 595, 1873, 5896,\ldots$).}\\
\hline
 \hspace{-3.5mm}
\begin{minipage}[c]{3.8em}\scalebox{1}{
\begin{tikzpicture}[scale=0.3]
\draw [line width=1](0,0)--(0,1)--(1,0)--(1,1);
\draw (0,0) node [scale=0.3, circle, draw,fill=black]{};
\draw (0,1) node [scale=0.3, circle, draw,fill=black]{};
\draw (1,0) node [scale=0.3, circle, draw,fill=black]{};
\draw (1,1) node [scale=0.3, circle, draw,fill=black]{};
\node [left] at (0,-0.1){\small$3$};
\node [left] at (0,1.1){\small$1$};
\node [right] at (1,-0.1){\small$2$};
\node [right] at (1,1.1){\small$4$};
\end{tikzpicture}
}\end{minipage}
\hspace{-0.4cm},  \hspace{-3.5mm}
\begin{minipage}[c]{3.8em}\scalebox{1}{
\begin{tikzpicture}[scale=0.3]
\draw [line width=1](0,0)--(0,1)--(1,0)--(1,1);
\draw (0,0) node [scale=0.3, circle, draw,fill=black]{};
\draw (0,1) node [scale=0.3, circle, draw,fill=black]{};
\draw (1,0) node [scale=0.3, circle, draw,fill=black]{};
\draw (1,1) node [scale=0.3, circle, draw,fill=black]{};
\node [left] at (0,-0.1){\small$2$};
\node [left] at (0,1.1){\small$4$};
\node [right] at (1,-0.1){\small$3$};
\node [right] at (1,1.1){\small$1$};
\end{tikzpicture}
}\end{minipage}
\hspace{-0.4cm}, \hspace{-3.5mm}
\begin{minipage}[c]{3.8em}\scalebox{1}{
\begin{tikzpicture}[scale=0.3]
\draw [line width=1](0,0)--(0,1)--(1,0)--(1,1);
\draw (0,0) node [scale=0.3, circle, draw,fill=black]{};
\draw (0,1) node [scale=0.3, circle, draw,fill=black]{};
\draw (1,0) node [scale=0.3, circle, draw,fill=black]{};
\draw (1,1) node [scale=0.3, circle, draw,fill=black]{};
\node [left] at (0,-0.1){\small$1$};
\node [left] at (0,1.1){\small$3$};
\node [right] at (1,-0.1){\small$4$};
\node [right] at (1,1.1){\small$2$};
\end{tikzpicture}
}\end{minipage}
\hspace{-0.4cm}, \hspace{-3.5mm}
\begin{minipage}[c]{3.8em}\scalebox{1}{
\begin{tikzpicture}[scale=0.3]
\draw [line width=1](0,0)--(0,1)--(1,0)--(1,1);
\draw (0,0) node [scale=0.3, circle, draw,fill=black]{};
\draw (0,1) node [scale=0.3, circle, draw,fill=black]{};
\draw (1,0) node [scale=0.3, circle, draw,fill=black]{};
\draw (1,1) node [scale=0.3, circle, draw,fill=black]{};
\node [left] at (0,-0.1){\small$4$};
\node [left] at (0,1.1){\small$2$};
\node [right] at (1,-0.1){\small$1$};
\node [right] at (1,1.1){\small$3$};
\end{tikzpicture}
}\end{minipage}
\hspace{-0.4cm} & Theorem~\ref{thm-11}\\
\hline
 \hspace{-3.5mm}
\begin{minipage}[c]{3.8em}\scalebox{1}{
\begin{tikzpicture}[scale=0.3]
\draw [line width=1](0,0)--(0,1)--(1,0)--(1,1);
\draw (0,0) node [scale=0.3, circle, draw,fill=black]{};
\draw (0,1) node [scale=0.3, circle, draw,fill=black]{};
\draw (1,0) node [scale=0.3, circle, draw,fill=black]{};
\draw (1,1) node [scale=0.3, circle, draw,fill=black]{};
\node [left] at (0,-0.1){\small$1$};
\node [left] at (0,1.1){\small$4$};
\node [right] at (1,-0.1){\small$2$};
\node [right] at (1,1.1){\small$3$};
\end{tikzpicture}
}\end{minipage}
\hspace{-0.4cm},  \hspace{-3.5mm}
\begin{minipage}[c]{3.8em}\scalebox{1}{
\begin{tikzpicture}[scale=0.3]
\draw [line width=1](0,0)--(0,1)--(1,0)--(1,1);
\draw (0,0) node [scale=0.3, circle, draw,fill=black]{};
\draw (0,1) node [scale=0.3, circle, draw,fill=black]{};
\draw (1,0) node [scale=0.3, circle, draw,fill=black]{};
\draw (1,1) node [scale=0.3, circle, draw,fill=black]{};
\node [left] at (0,-0.1){\small$4$};
\node [left] at (0,1.1){\small$1$};
\node [right] at (1,-0.1){\small$3$};
\node [right] at (1,1.1){\small$2$};
\end{tikzpicture}
}\end{minipage}
\hspace{-0.4cm}, \hspace{-3.5mm}
\begin{minipage}[c]{3.8em}\scalebox{1}{
\begin{tikzpicture}[scale=0.3]
\draw [line width=1](0,0)--(0,1)--(1,0)--(1,1);
\draw (0,0) node [scale=0.3, circle, draw,fill=black]{};
\draw (0,1) node [scale=0.3, circle, draw,fill=black]{};
\draw (1,0) node [scale=0.3, circle, draw,fill=black]{};
\draw (1,1) node [scale=0.3, circle, draw,fill=black]{};
\node [left] at (0,-0.1){\small$2$};
\node [left] at (0,1.1){\small$3$};
\node [right] at (1,-0.1){\small$1$};
\node [right] at (1,1.1){\small$4$};
\end{tikzpicture}
}\end{minipage}
\hspace{-0.4cm}, \hspace{-3.5mm}
\begin{minipage}[c]{3.8em}\scalebox{1}{
\begin{tikzpicture}[scale=0.3]
\draw [line width=1](0,0)--(0,1)--(1,0)--(1,1);
\draw (0,0) node [scale=0.3, circle, draw,fill=black]{};
\draw (0,1) node [scale=0.3, circle, draw,fill=black]{};
\draw (1,0) node [scale=0.3, circle, draw,fill=black]{};
\draw (1,1) node [scale=0.3, circle, draw,fill=black]{};
\node [left] at (0,-0.1){\small$3$};
\node [left] at (0,1.1){\small$2$};
\node [right] at (1,-0.1){\small$4$};
\node [right] at (1,1.1){\small$1$};
\end{tikzpicture}
}\end{minipage}
\hspace{-0.4cm} & Theorem~\ref{thm-1423} \\
\hline
\multicolumn{2}{l}{{\bf Wilf-class 3} given by the recurrence $a(n) = 3a(n-1) + a(n-2) - a(n-3)$.} \\
\multicolumn{2}{l}{The sequence begins with $1, 2, 6, 19, 61, 196, 630, 2025,6509,\ldots$.}\\
\hline
 \hspace{-3.5mm}
\begin{minipage}[c]{3.8em}\scalebox{1}{
\begin{tikzpicture}[scale=0.3]
\draw [line width=1](0,0)--(0,1)--(1,0)--(1,1);
\draw (0,0) node [scale=0.3, circle, draw,fill=black]{};
\draw (0,1) node [scale=0.3, circle, draw,fill=black]{};
\draw (1,0) node [scale=0.3, circle, draw,fill=black]{};
\draw (1,1) node [scale=0.3, circle, draw,fill=black]{};
\node [left] at (0,-0.1){\small$3$};
\node [left] at (0,1.1){\small$1$};
\node [right] at (1,-0.1){\small$4$};
\node [right] at (1,1.1){\small$2$};
\end{tikzpicture}
}\end{minipage}
\hspace{-0.4cm},  \hspace{-3.5mm}
\begin{minipage}[c]{3.8em}\scalebox{1}{
\begin{tikzpicture}[scale=0.3]
\draw [line width=1](0,0)--(0,1)--(1,0)--(1,1);
\draw (0,0) node [scale=0.3, circle, draw,fill=black]{};
\draw (0,1) node [scale=0.3, circle, draw,fill=black]{};
\draw (1,0) node [scale=0.3, circle, draw,fill=black]{};
\draw (1,1) node [scale=0.3, circle, draw,fill=black]{};
\node [left] at (0,-0.1){\small$2$};
\node [left] at (0,1.1){\small$4$};
\node [right] at (1,-0.1){\small$1$};
\node [right] at (1,1.1){\small$3$};
\end{tikzpicture}
}\end{minipage}
\hspace{-0.4cm} & Theorem~\ref{thm-new-wilf-equiv} \\
\end{tabular}
\caption{Wilf-equivalence classification for N-patterns where the patterns in the same row are Wilf-equivalent by Theorem~\ref{trivial-sym-thm}}\label{N-patterns-tab}
\end{table}

There are 24 N-words. By Theorem~\ref{trivial-sym-thm}, complementing the labels of a poset gives a Wilf-equivalent pattern, so in the cases to consider we can assume that in an N-word {\bf 1} is to the left of {\bf 4}. Moreover, by Theorem~\ref{trivial-sym-thm}, flipping the poset with respect to a horizontal line then complementing the labels and reading the word backward (to obtain N-shape with {\bf 1}  being to the left of {\bf 4}) we see that {\bf 1423} is equivalent to {\bf 2314}, {\bf 1432} is equivalent to {\bf 3214}, {\bf 1342} is equivalent to {\bf 3124}, {\bf 1243} is equivalent to {\bf 2134}, while {\bf 1234}, {\bf 1324}, {\bf 2143} and {\bf 3142} are equivalent to themselves. These observations leave us with eight cases to consider, two of which are already considered in Theorems~\ref{thm-12} and~\ref{thm-11}. It turns out that there are three Wilf-equivalence classes for the 24 N-patterns, which are summarized in Table~\ref{N-patterns-tab}.

\subsection{N-patterns in Wilf-equivalence class 1 in Table~\ref{N-patterns-tab}}\label{Wilf-class-1-subsec}

\begin{thm}\label{thm-two-N-patterns} For the N-bipartite POP $p=$\hspace{-3.5mm}
\begin{minipage}[c]{3.8em}\scalebox{1}{
\begin{tikzpicture}[scale=0.3]
\draw [line width=1](0,0)--(0,1)--(1,0)--(1,1);
\draw (0,0) node [scale=0.3, circle, draw,fill=black]{};
\draw (0,1) node [scale=0.3, circle, draw,fill=black]{};
\draw (1,0) node [scale=0.3, circle, draw,fill=black]{};
\draw (1,1) node [scale=0.3, circle, draw,fill=black]{};
\node [left] at (0,-0.1){\small$2$};
\node [left] at (0,1.1){\small$1$};
\node [right] at (1,-0.1){\small$4$};
\node [right] at (1,1.1){\small$3$};
\end{tikzpicture}
}\end{minipage}  \hspace{-0.4cm},  we have $a(0)=a(1)=1$ and, for $n\geq 2$, $a(n) = 4a(n-1) - 3a(n-2) + 1$, hence giving the formula for $a(n)$ and the g.f.\ in the statement of Theorem~\ref{thm-12}.
\end{thm}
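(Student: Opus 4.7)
My plan is to adapt the case analyses in the proofs of Theorems~\ref{t3} and~\ref{t4} to the N-pattern $p = \mathbf{2143}$. Let $\pi$ be a $p$-avoiding $n$-permutation, and write $\pi_s = n$ and $\pi_t = n-1$. The starting observation is that for $p = \mathbf{2143}$, the maximum element $n$ can only play one of the two ``top'' labels, namely label~1 at position $1$ (which requires three positions to its right, so $s \le n-3$) or label~3 at position $3$ (which requires two positions to its left and one to its right, so $3 \le s \le n-1$). The same constraints hold for $n-1$. Thus $n$ placed at $s=n$ is always safe, as is $n-1$ at $t=n$; and for $s$ small (respectively, $s$ large), $n$ is blocked from one of the two ``top'' roles.

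Following the inclusion--exclusion template of Theorem~\ref{t3} with $k=4$, I would identify a safe zone of two positions for each of $n$ and $n-1$ in which, if the element lands there, it cannot be part of any occurrence of $p$, so that $\pi$ and $\pi \setminus n$ (respectively $\pi \setminus (n-1)$) are $p$-equivalent. Summing over the $2 + 2 = 4$ safe (position, element) pairs, and subtracting the overlap where both $n$ and $n-1$ are safe simultaneously, gives the ``main'' contribution of roughly $4a(n-1) - 2a(n-2)$. Unlike the complete bipartite case, however, the N-pattern $\mathbf{2143}$ is not saturated: some nominally unsafe placements of $n$ and $n-1$ still yield $p$-avoiding $\pi$ precisely when the remaining descent-pair structure is sufficiently restricted (for example, when the suffix after $n$ is increasing, blocking $n$ from serving as label~1). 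Handling these additional configurations, analogous to the ``adjacent'' sub-case $s = t \pm 1$ that contributes $2[a(n-1) - (k-3)a(n-2)]$ in the proof of Theorem~\ref{t4}, will produce a net correction of $-a(n-2) + 1$ and yield the claimed recurrence $a(n) = 4a(n-1) - 3a(n-2) + 1$. The leftover $+1$ is expected to correspond to a single extremal configuration (such as the monotone identity of length $n$) that survives all the cases and needs to be counted separately.

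The main obstacle is the second phase of the analysis. In the complete bipartite setting of Theorems~\ref{t3} and~\ref{t4}, any assignment of $n$ and $n-1$ to the ``top'' label positions automatically produces an occurrence, and the bookkeeping collapses to a clean inclusion--exclusion. For $p = \mathbf{2143}$ the incomparability between labels $2$ and $3$ (and between $2$ and $4$) breaks this symmetry, so one must characterize carefully which descent-pair structures in the remaining positions force an occurrence. I expect the residual $-a(n-2) + 1$ to arise from balancing: on the one hand, configurations nominally ``safe'' that in fact contain $p$ via an occurrence not involving $n$ or $n-1$ (subtracting more than $2a(n-2)$), and on the other hand, a lone exceptional permutation that evades every bad configuration (accounting for the $+1$). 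Verifying these two effects combine to exactly $-a(n-2)+1$ will be the crux of the proof.
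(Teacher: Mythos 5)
There is a genuine gap, on two counts. First, your opening inclusion--exclusion step is not justified for this pattern. For $p=\mathbf{2143}$ (occurrence $\pi_{i_1}\pi_{i_2}\pi_{i_3}\pi_{i_4}$ with $\pi_{i_2}<\pi_{i_1}$, $\pi_{i_4}<\pi_{i_1}$, $\pi_{i_4}<\pi_{i_3}$), the only position at which $n$ is \emph{unconditionally} barred from every occurrence is the last one: for instance in $41253$ the maximum sits in position $n-1$ and still serves as label $3$ in the occurrence $4153$. The same holds for $n-1$. So there are only the two safe (position, element) pairs $\pi_n=n$ and $\pi_n=n-1$, contributing $2a(n-1)$, not your claimed ``$2+2=4$ safe pairs'' giving a main term $4a(n-1)-2a(n-2)$; the latter quantity has no combinatorial derivation in your sketch. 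The remaining $p$-avoiding permutations are exactly those in which $n$ and $n-1$ are adjacent and neither is last, and these split into two genuinely different adjacency cases: $(n-1)n$ (where deleting $n$ gives a $p$-equivalent permutation whose maximum is not last, hence $a(n-1)-a(n-2)$ of them) and $n(n-1)$ (where avoidance forces everything after $n-1$ to be increasing).

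Second, the decisive computation is missing rather than carried out: you state that the residual effects ``will produce a net correction of $-a(n-2)+1$'' and yourself flag its verification as the crux. The paper closes this by introducing the auxiliary count $c(n)$ of $p$-avoiding $n$-permutations in which $n$ is not last and the elements to its right are increasing, obtaining $a(n)=3a(n-1)-a(n-2)+c(n-1)$, deriving $c(n)=2c(n-1)+a(n-2)$ by a second case analysis, and then proving $c(n)=a(n)-2a(n-1)+1$ by induction. In particular the ``$+1$'' is the outcome of that induction on the counting sequence, not the count of a single extremal permutation such as the identity that ``survives all the cases,'' so your proposed mechanism for it is not the right one. Without setting up an auxiliary quantity of this kind (or an equivalent device) and establishing its recursion, the proposal does not reach the recurrence $a(n)=4a(n-1)-3a(n-2)+1$.
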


\begin{proof} 
Let $b(n)$ (resp., $c(n)$) be the number of $p$-avoiding $n$-permutations where $n$ is not the rightmost element (resp., and the elements to the right of $n$ are in increasing order). Suppose that $\pi_1\cdots\pi_n\in S_n$. Clearly, if $\pi_n=n$ (or, $\pi_n=n-1$) then $\pi_n$ cannot be involved in an occurrence of $p$. Hence, removing $\pi_n$ one obtains a $p$-equivalent permutation and therefore $b(n)=a(n)-a(n-1)$.

Let $\pi=\pi_1\cdots\pi_n$ be $p$-avoiding. One can see that there are four possibilities to avoid $p$.
\begin{itemize}
\item $\pi_n=n$. Clearly, there are $a(n-1)$ such permutations.
\item $\pi_n=n-1$. Clearly, there are $a(n-1)$ such permutations.
\item $\pi_{i-1}=n-1$ and $\pi_i=n$ for $2\leq i<n$. We have $b(n-1)=a(n-1)-a(n-2)$ such permutations since removing $n$ from $\pi$ gives a $p$-equivalent permutation.
\item $\pi_{i-1}=n$ and $\pi_i=n-1$ for $2\leq i<n$. In this case, $\pi_{i+1}\cdots\pi_n$ must be an increasing permutation or else, there is an occurrence of $p$ involving the elements $n-1$ and $n$. There are $c(n-1)$ permutations in this case since removing $n$ we obtain a permutation $\pi'$ counted by $c(n-1)$ and $n$ can be inserted back in any such $\pi'$ next to the left of $n-1$ without introducing an occurrence of $p$.
\end{itemize} 
Summarising all the cases above, we obtain
\begin{equation}\label{a-recurssion-aac}
a(n)=3a(n-1)-a(n-2)+c(n-1).
\end{equation}
Next, we find a recurrence relation for $c(n)$. Supposed  that $\pi=\pi_1\cdots\pi_n$ is an $n$-permutation counted by $c(n)$. We consider all possibilities for position of $n-1$ in $\pi$.
\begin{itemize}
\item $n-1$ is to the left of $n$, in which case $n-1$ must be immediately to the left of $n$ to avoid $p$. But then, removing $n$ from $\pi$ we obtain a $p$-equivalent permutation counted by $c(n-1)$. So, there are $c(n-1)$ permutations in this case.  
\item $\pi_{n-1}\pi_n=n(n-1)$. Removing $n(n-1)$ from $\pi$ we obtain a $p$-avoiding $(n-2)$-permutation, and conversely, we can adjoin $n(n-1)$ to the right of any permutation counted by $a(n-2)$ without creating an occurrence of $p$. Hence, we have $a(n-2)$ permutations in this case. 
\item $\pi_i\cdots\pi_n=n\pi_{i+1}\cdots \pi_{n-1}(n-1)$ where $i<n-1$ and $\pi_{i+1}\cdots \pi_{n-1}$ is an increasing permutation. Removing $(n-1)$ we obtain a permutation counted by $c(n-1)$, and conversely, we can adjoin the element $n-1$ at the end of any permutation $\pi'$ counted by $c(n-1)$ (and replacing $n-1$ in $\pi'$ by $n$) without introducing an occurrence of $p$. Hence, we have $c(n-1)$ permutations in this case. 
\end{itemize}
So, $c(n)=2c(n-1)+a(n-2)$.  Now, we will prove by induction by $n$ that $c(n)=a(n)-2a(n-1)+1$ for $n\geq 1$. The base cases of $c(1)=0$ and $c(2)=1$ are easy to check. Suppose $c(n-1)=a(n-1)-2a(n-2)+1$, then
$$c(n)=2c(n-1)+a(n-2)=c(n-1)+a(n-1)-a(n-2)+1=a(n)-2a(n-1)+1$$ as desired, where the last equality is obtained by using (\ref{a-recurssion-aac}). Finally,
$$a(n)=3a(n-1)-a(n-2)+c(n-1)=4a(n-1)-3a(n-2)+1.$$
\end{proof} 

\begin{thm}\label{thm-two-N-patterns-2} For the N-bipartite POP $p=$\hspace{-3.5mm}
\begin{minipage}[c]{3.8em}\scalebox{1}{
\begin{tikzpicture}[scale=0.3]
\draw [line width=1](0,0)--(0,1)--(1,0)--(1,1);
\draw (0,0) node [scale=0.3, circle, draw,fill=black]{};
\draw (0,1) node [scale=0.3, circle, draw,fill=black]{};
\draw (1,0) node [scale=0.3, circle, draw,fill=black]{};
\draw (1,1) node [scale=0.3, circle, draw,fill=black]{};
\node [left] at (0,-0.1){\small$1$};
\node [left] at (0,1.1){\small$2$};
\node [right] at (1,-0.1){\small$3$};
\node [right] at (1,1.1){\small$4$};
\end{tikzpicture}
}\end{minipage} \hspace{-0.5cm} , we have $a(0)=a(1)=1$ and, for $n\geq 2$, $a(n) = 4a(n-1) - 3a(n-2) + 1$, hence giving the formula for $a(n)$ and the g.f.\ in the statement of Theorem~\ref{thm-12}.
\end{thm}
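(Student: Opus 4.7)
The plan is to enumerate $p$-avoiding permutations in $S_n$ by conditioning on the position $k$ of the element $1$ in $\pi$. The key observation is that $1$, being the smallest value, cannot play the role of label $2$ or label $4$ in an occurrence of $p$ --- both of these labels sit at the top of the N-shape and require the assigned value to exceed at least one other value in the occurrence --- so $1$ must appear as either label $1$ or label $3$ whenever it is involved in an occurrence.

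If $k = n$, then $\pi_n = 1$ cannot be in any occurrence at all, since the only label the rightmost element of a subsequence can carry is label $4$, which demands $\pi_{i_3} < 1$; deleting $\pi_n$ therefore gives a bijection with $S_{n-1}(p)$, contributing $a(n-1)$. If $1 \leq k \leq n-1$, then $1$ at position $k$ enters an occurrence precisely when either $\pi_1\cdots\pi_{k-1}$ contains an ascent (letting $1$ play label $3$, with label $4$ filled by any element past position $k$) or $\pi_{k+1}\cdots\pi_n$ contains a valley subsequence $\pi_{i_2} > \pi_{i_3} < \pi_{i_4}$ (letting $1$ play label $1$). Hence $\pi$ must satisfy $\pi_1 > \pi_2 > \cdots > \pi_{k-1}$ (strictly decreasing prefix) and $\pi_{k+1}\cdots\pi_n$ must be \emph{peak-shaped}, i.e., first strictly increasing and then strictly decreasing (equivalently, valley-avoiding). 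A short case split on which half contains each position of a hypothetical up-down-up subsequence shows that these two local conditions are also sufficient: any occurrence crossing the two halves forces an ascent inside the decreasing prefix or a valley inside the peak-shaped suffix, both of which are forbidden.

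For fixed $k \in [1, n-1]$ the count is therefore $\binom{n-1}{k-1}\, 2^{n-k-1}$, where the binomial coefficient chooses the $k-1$ elements of $\{2,\ldots,n\}$ forming the decreasing prefix and $2^{n-k-1}$ enumerates the peak-shaped arrangements of the remaining $n-k$ elements (each non-maximum element independently goes to the left or to the right of the maximum). Summing over $k$ and invoking $\sum_{j=0}^{n-1}\binom{n-1}{j}\, 2^{n-1-j} = 3^{n-1}$ produces the additive form $a(n) = a(n-1) + \tfrac{1}{2}(3^{n-1}-1)$, from which the claimed recurrence $a(n) = 4a(n-1) - 3a(n-2) + 1$ follows immediately by subtracting three times the analogous relation for $a(n-1) - a(n-2)$. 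The main obstacle is the sufficiency half of the structural characterization in the $k \le n-1$ case, but the case analysis above disposes of it since any would-be crossing occurrence reduces to a forbidden local sub-pattern in one of the two halves.
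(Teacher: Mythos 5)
Your proof is correct, and it takes a genuinely different route from the paper. The paper conditions on the maximum: it splits off the case $\pi_1=n$, lets $b(n)$ count avoiders in which $n$ is not first, observes that everything to the right of $n$ must then decrease, and derives $b(n)=3b(n-1)+1$ by a case analysis on the location of $n-1$, yielding the recurrence. You instead condition on the position $k$ of the minimum $1$ and prove an exact structural characterization: for $k\le n-1$ a permutation avoids $p$ if and only if the prefix before $1$ is decreasing and the suffix after $1$ is unimodal (valley-free), while for $k=n$ the element $1$ is inert and deletion gives a bijection with $S_{n-1}(p)$. Your necessity and sufficiency arguments check out: $1$ can only occupy pattern position $\mathbf{1}$ or $\mathbf{3}$, which forces the two local conditions when $k\le n-1$; conversely, with a decreasing prefix and $\pi_k=1$ minimal, the second element of any occurrence must lie in the suffix, so all of positions $\mathbf{2},\mathbf{3},\mathbf{4}$ land there and produce a forbidden valley. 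The resulting count $\binom{n-1}{k-1}2^{\,n-k-1}$ per position and the evaluation $a(n)=a(n-1)+\tfrac12(3^{n-1}-1)$ are right (the additive relation in fact holds for all $n\ge 1$, so subtracting three times its shift gives $a(n)=4a(n-1)-3a(n-2)+1$ for all $n\ge 2$, with $n=2$ also checkable directly). What your approach buys is an explicit description of the avoiders and, as a byproduct, an immediate direct proof of the closed form $a(n)=(3^n-2n+3)/4$ by summing, rather than passing through an auxiliary recursively defined statistic; the paper's approach stays uniform with the recursive style used for the other N-patterns and avoids any summation identities.
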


\begin{proof} Let $\pi=\pi_1\cdots\pi_n$ be a $p$-avoiding permutation. If $\pi_1=n$ then there are clearly $a(n-1)$ such permutations. Let $b(n)$ be the number of $p$-avoiding $n$-permutations $\pi_1\cdots\pi_n$ with $\pi_i=n$ for $1<i\leq n$. So,
\begin{equation}\label{a=a+b-relation}
a(n)=a(n-1)+b(n).
\end{equation} 
Note that in any permutation counted by $b(n)$, $\pi_{i+1}>\pi_{i+2}>\cdots >\pi_n$ because if $\pi_j<\pi_s$ for some $i<j<s\leq n$ then the elements $\pi_1$, $n$, $\pi_j$ and $\pi_s$ form an occurrence of $p$. We claim that
\begin{equation}\label{b=3b}
b(n)=3b(n-1)+1.
\end{equation}
Indeed, permutations counted by $b(n)$ fall in one of the following four cases, where we noted that if the element $n-1$ is to the left of $n$ then either the elements are next to each other, or $\pi_1=n-1$ (otherwise, an occurrence of $p$ involving $n$ and $n-1$ would be created):
\begin{itemize}
\item The element $n-1$ is to the right of $n$, and hence $\pi_{i+1}=n-1$. Removing $n$ we obtain a permutation counted by $b(n-1)$. Vice versa, any permutation counted by $b(n-1)$ can be extended to a permutation counted by $b(n)$ by inserting $n$ immediately to the left of the element $n-1$. Thus, we have $b(n-1)$ permutations in this case.
\item $\pi_1=n-1$ and $\pi_2=n$. There is one such permutation. 
\item $\pi_1=n-1$, $\pi_2\neq n$. Removing $n-1$ and replacing $n$ by $n-1$ in the obtained permutation, we get a permutation counted by $b(n-1)$. This operation is invertible as $n-1$ in the leftmost position of an $n$-permutation cannot be involved in an occurrence of $p$. Hence, we have $b(n-1)$ permutations in this case.
\item $\pi_{i-1}\pi_i=(n-1)n$ for $i>2$. It is easy to check that removing $n$ we obtain a $p$-equivalent permutation counted by $b(n-1)$. Thus, we have $b(n-1)$ permutations in this case completing our proof of (\ref{b=3b}). 
\end{itemize}

From (\ref{a=a+b-relation}) and (\ref{b=3b}), to complete the proof of the theorem, clearly we only need to show that, for $n\geq 2$,
$$b(n)=3a(n-1)-3a(n-2)+1.$$
Indeed, from (\ref{a=a+b-relation}), $3a(n-1)-3a(n-2)=3b(n-1)$, and hence
$$3a(n-1)-3a(n-2)+1=3b(n-1)+1=b(n)$$
as desired.
\end{proof} 

\begin{thm}\label{thm-N-pattern-1111} For the POP $p=$\hspace{-3.5mm}
\begin{minipage}[c]{3.8em}\scalebox{1}{
\begin{tikzpicture}[scale=0.3]
\draw [line width=1](0,0)--(0,1)--(1,0)--(1,1);
\draw (0,0) node [scale=0.3, circle, draw,fill=black]{};
\draw (0,1) node [scale=0.3, circle, draw,fill=black]{};
\draw (1,0) node [scale=0.3, circle, draw,fill=black]{};
\draw (1,1) node [scale=0.3, circle, draw,fill=black]{};
\node [left] at (0,-0.1){\small$1$};
\node [left] at (0,1.1){\small$3$};
\node [right] at (1,-0.1){\small$2$};
\node [right] at (1,1.1){\small$4$};
\end{tikzpicture}
}\end{minipage}
\hspace{-0.4cm}, we have $a(0)=a(1)=1$ and, for $n\geq 2$, $a(n) = 4a(n-1) - 3a(n-2) + 1$, hence giving the formula for $a(n)$ and the g.f.\ in the statement of Theorem~\ref{thm-12}.
\end{thm}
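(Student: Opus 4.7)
My plan is to mirror the case-analysis strategies of Theorems~\ref{thm-two-N-patterns} and~\ref{thm-two-N-patterns-2}: condition on the position of the element $n$ in $\pi$, isolate the clean cases, and introduce an auxiliary sequence to handle the subtle middle cases via mutual recurrences.

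First, I will show that if $\pi_1 = n$ or $\pi_2 = n$, then $n$ cannot participate in any occurrence of $p$: as the maximum element of $\pi$, $n$ can only play role $\mathbf{3}$ or $\mathbf{4}$ in an occurrence of $p$, and these roles require respectively at least $2$ or $3$ positions strictly to the left of $n$, which is impossible when $n$ sits in position $1$ or $2$. Hence in each of these two sub-cases $\pi$ avoids $p$ iff $\pi\setminus n$ does, contributing $2a(n-1)$ permutations in total.

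Next, for $\pi_i = n$ with $i\geq 3$, $p$-avoidance of $\pi$ is equivalent to the conjunction of the following three conditions: (a) the prefix $\pi_1\cdots \pi_{i-1}$ avoids the 3-element POP $q$ on labels $\{1,2,3\}$ with $1<3$ and $2<3$ (so that $n$ does not play role $\mathbf{4}$); (b) every value in positions $\{2,\ldots,i-1\}$ strictly exceeds every value in positions $\{i+1,\ldots,n\}$ (so that $n$ does not play role $\mathbf{3}$); and (c) $\pi\setminus n$ avoids $p$. When $i=n$, condition (b) is vacuous, and since the first three positions of any $p$-occurrence form a $q$-occurrence, $q$-avoidance implies $p$-avoidance; thus the sub-case $\pi_n=n$ contributes exactly the number of $q$-avoiding $(n-1)$-permutations, which equals $2^{n-2}$. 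For $3\leq i\leq n-1$, condition (b) forces a rigid ``top/bottom'' block structure: once $\pi_1$'s value is chosen, the sets of values occupying positions $\{2,\ldots,i-1\}$ and $\{i+1,\ldots,n\}$ are determined, with flexibility only in their internal arrangement (constrained by (a) and (c)).

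To assemble these contributions, I will introduce an auxiliary sequence $b(n)$ counting $p$-avoiding $n$-permutations with a carefully chosen marker configuration (for instance, with a prescribed adjacency involving $n-1$, playing a role analogous to the sequence $c(n)$ in the proof of Theorem~\ref{thm-two-N-patterns}). By further conditioning on the positions of $n$ and $n-1$ inside the block structure forced by condition (b), I will derive mutual recurrences between $a(n)$ and $b(n)$. Induction will then show $b(n)=\alpha\, a(n-1)+\beta\, a(n-2)+\gamma$ for explicit constants, and substituting back yields $a(n)=4a(n-1)-3a(n-2)+1$; the closed formula $a(n)=(3^n-2n+3)/4$ and the generating function then follow immediately from Theorem~\ref{thm-12}.

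The main obstacle will be selecting the auxiliary sequence $b(n)$ in such a way that the explicit $2^{n-2}$ contribution from the $\pi_n=n$ sub-case is cleanly absorbed into (or cancels against) a linear combination of $a(m)$'s in the mutual recurrence. The rigid block structure from condition (b) makes this possible in principle, but the case analysis—varying both $i$ and $\pi_1$ within the middle range—requires careful bookkeeping to ensure that the accumulated count is indeed $4a(n-1)-3a(n-2)+1$ rather than a more complicated expression.
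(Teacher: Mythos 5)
Your structural decomposition is correct as far as it goes: $n$ in position $1$ or $2$ cannot take part in an occurrence (it can only play the roles $3$ or $4$, which need at least two, respectively three, positions to its left), giving $2a(n-1)$; for $\pi_i=n$ with $i\geq 3$, avoidance of $p$ is indeed equivalent to your conditions (a), (b), (c); and the case $\pi_n=n$ contributes exactly the number of $(n-1)$-permutations avoiding $123$ and $213$ simultaneously, i.e.\ $2^{n-2}$ (for $n=4$ the three contributions are $12+4+3=19$, as they should be). But the proof stops exactly where the theorem begins. You never define the auxiliary sequence $b(n)$ (``for instance, with a prescribed adjacency involving $n-1$'' is not a definition), you derive no recurrence for it, and the claimed identity $b(n)=\alpha a(n-1)+\beta a(n-2)+\gamma$ is only conjectured. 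Worse, as formulated it cannot hold for any sequence that isolates the middle range $3\leq i\leq n-1$: since the contribution of all positions $i\geq 3$ together must equal $2a(n-1)-3a(n-2)+1$, the middle-range count is forced to be $2a(n-1)-3a(n-2)+1-2^{n-2}$, and because $a(n)$ grows like $3^n/4$, a term $2^{n-2}$ is not expressible as a fixed linear combination of $a(n-1)$, $a(n-2)$ and a constant. So the ``clean absorption'' you defer to future bookkeeping is not routine verification; it requires either an auxiliary quantity that deliberately mixes the $i=n$ case with the middle range, or auxiliary sequences carrying their own exponential terms, and in either case a genuinely new counting argument that is absent from the proposal.

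For comparison, the paper's proof sidesteps the exponential side terms entirely by conditioning on the pair $\{n-1,n\}$ rather than on the position of $n$: $p$-avoidance forces $\{\pi_1,\pi_2\}\cap\{n-1,n\}\neq\emptyset$, which immediately yields the main count $4a(n-1)-2a(n-2)$ (insert $n$ or $n-1$ into position $1$ or $2$ of an avoider, correcting for the doubly counted case $\{\pi_1,\pi_2\}=\{n-1,n\}$); the only correction needed is the number $b(n)$ of ``bad'' permutations, those with $n-1$ but not $n$ among the first two entries that contain $p$ although removing $n-1$ gives an avoider, and an induction (via the reduced ``r-bad'' permutations obtained by deleting the initial $n-1$) shows $b(n)=a(n-2)-1$, giving $a(n)=4a(n-1)-2a(n-2)-(a(n-2)-1)$. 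If you want to salvage your position-of-$n$ decomposition, you would need to carry out an analogous explicit analysis of the block structure in condition (b) together with condition (c), and that work — the actual content of the theorem — is missing.
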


\begin{proof} The base cases are clearly true, so let $n\geq 2$. Let $\pi=\pi_1\cdots\pi_n$ be a $p$-avoiding permutation. Clearly, $\pi_1\in\{n-1,n\}$ or $\pi_2\in\{n-1,n\}$ since otherwise the elements $\pi_1,\pi_2,n-1,n$ in $\pi$ form an occurrence of $p$. 
This explains the term $4a(n-1) - 2a(n-2)$ as we can insert either $n-1$ or $n$ in either the first or the second position in a $p$-avoiding $(n-1)$-permutation and subtract the case counted twice when $\{\pi_1,\pi_2\}=\{n-1,n\}$. However, some of the $n$-permutations counted by the term $4a(n-1)$ are not $p$-avoiding, as they contain an occurrence of $p$ with $n-1$ playing the role of {\boldmath $1$}. We say that a permutation is {\em bad} if $n-1\in \{ \pi_1,\pi_2 \}$, $n\not\in \{ \pi_1,\pi_2 \}$, $\pi$ has an instance of $p$ but the permutation $\pi'$ obtained from $\pi$ by removing $n-1$ and reducing $n$ by $1$ is $p$-avoiding. Let $b(n)$ be the number of bad $n$-permutations. Then
\begin{equation}\label{eq-reccur-rel-1}
a(n) = 4a(n-1) - 2a(n-2)-b(n).
\end{equation}  

Note that any bad $n$-permutation is of the form $(n-1)\pi_2\cdots \pi_i n\pi_{i+2}\cdots\pi_n$, where 
\begin{itemize}
\item $2\leq i\leq n-2$ (that is, $\pi_2\neq n$ and $\pi_n\neq n$), and
\item there exist $x\in\{\pi_2,\ldots,\pi_i\}$ and $y\in\{\pi_{i+2},\ldots,\pi_n\}$ such that $x<y$.
\end{itemize}
Indeed, if $\pi_1=n-1$ and it plays the role of {\boldmath $1$} in an occurrence of $p$ then the role of {\boldmath $3$} must be played by $n$, so that there exist $x$ and $y$ with the properties described above. On the other hand, if $\pi_2=n-1$ and $\pi_2xny$ is an occurrence of $p$ then clearly $x\neq n-2$. But since $\pi'$ is $p$-avoiding, we have $\pi_1=n-2$ or $\pi_3=n-2$.
This leads to a contradiction with $(n-2)x(n-1)y$ being an occurrence of $p$ in $\pi'$ showing that no bad $n$-permutation with $\pi_2=n-1$ exists.

We next show, by induction on $n$, that $b(n)=a(n-2)-1$ (for $n\geq 2$), which gives us the desired result by (\ref{eq-reccur-rel-1}). The three base cases hold as there are no bad $n$-permutations for $n=2,3$, and the only bad $4$-permutation is $3142$. Suppose now that $n\geq 5$ and the statement holds for smaller length permutations.

Since each bad $n$-permutation begins with $n-1$, we can remove this element, replace $n$ by $n-1$ in the obtained permutation, and focus on an equivalent problem of counting $p$-avoiding $(n-1)$-permutations having a subsequence $x(n-1)y$, $x<y$; we call such $(n-1)$-permutations r-bad (standing for ``reduced bad''). Clearly, removing the element $n-1$ from an r-bad permutation results in a $p$-avoiding $(n-2)$-permutation, so we will generate all r-bad permutations from the permutations counted by $a(n-2)$ by inserting the element $n-1$.  

We can see that inserting $n-1$ in the first position does not result in an r-bad permutation. If $n-1$ is inserted in the second position of a $p$-avoiding $(n-2)$-permutation $\pi=\pi_1\cdots\pi_{n-2}$, then the obtained permutation is r-bad if and only if $\pi_1\neq n-2$ (as only then we obtain the subsequence $x(n-1)y$; also, note that no occurrence of $p$ can be introduced in this case).  This gives us $a(n-2)-a(n-3)$ r-bad $(n-1)$-permutations since the number of $p$-avoiding $(n-2)$-permutations beginning with $n-2$ is clearly $a(n-3)$. 

Suppose now that $n-1$ is inserted in position $i$, $3\leq i\leq n-2$. Since the result of the insertion is a $p$-avoiding permutation $\pi'=\pi'_1\cdots\pi'_{n-1}$, we see that either $\pi'_1=n-2$ or  $\pi'_2=n-2$ (otherwise, $\pi'_1,\pi'_2,n-2,n-1$ form an occurrence of $p$). We consider these two cases. 

\begin{itemize}
\item $\pi'_1=n-2$. $\pi'$ must contain a subsequence $x(n-1)y$, $x<y$, and clearly $x\neq n-2$. But then $(n-2)x(n-1)y$ is an occurrence of $p$, a contradiction showing that no r-bad permutations can be obtained in this case. 
\item $\pi'_2=n-2$. We claim that removing the element $n-2$ in $\pi'$ and replacing the element $n-1$ by $n-2$, one obtains an r-bad $(n-2)$-permutation $\pi''$. Indeed, $\pi'_2=n-2$ cannot be involved in an occurrence of $p$, or be the $x$ in the subsequence $x(n-1)y$, $x<y$. Hence, $\pi'$ is r-bad if an only if $\pi''$ is r-bad, and therefore the number of bad permutations $\pi'$ of this type is given by $b(n-1)$, which is $a(n-3)-1$ by the induction hypothesis. 
\end{itemize}
To summarise the cases above, $b(n)=a(n-2)-a(n-3)+a(n-3)-1=a(n-2)-1$, as desired.
\end{proof}

\begin{thm}\label{former-conj-1} For the POP $p=$\hspace{-3.5mm}
\begin{minipage}[c]{3.8em}\scalebox{1}{
\begin{tikzpicture}[scale=0.3]
\draw [line width=1](0,0)--(0,1)--(1,0)--(1,1);
\draw (0,0) node [scale=0.3, circle, draw,fill=black]{};
\draw (0,1) node [scale=0.3, circle, draw,fill=black]{};
\draw (1,0) node [scale=0.3, circle, draw,fill=black]{};
\draw (1,1) node [scale=0.3, circle, draw,fill=black]{};
\node [left] at (0,-0.1){\small$1$};
\node [left] at (0,1.1){\small$4$};
\node [right] at (1,-0.1){\small$3$};
\node [right] at (1,1.1){\small$2$};
\end{tikzpicture}
}\end{minipage}
\hspace{-0.4cm}, we have $a(0)=a(1)=1$ and, for $n\geq 2$, $a(n) = 4a(n-1) - 3a(n-2) + 1$, hence giving the formula for $a(n)$ and the g.f.\ in the statement of Theorem~\ref{thm-12}.
\end{thm}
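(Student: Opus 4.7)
Following the strategy of Theorem~\ref{thm-two-N-patterns-2}, I would set $b(n) := a(n) - a(n-1)$, which counts $p$-avoiders with $\pi_1 \neq n$: the equality $a(n) = a(n-1) + b(n)$ holds because labels $\mathbf{2}$ and $\mathbf{4}$ (at positions $i_2 \ge 2$ and $i_4 \ge 4$) are the only maxima in the poset defining $p$, so $n$ at position~$1$ participates in no occurrence, and removing it gives a bijection with $p$-avoiders of length $n-1$. The theorem is thus equivalent to proving $b(n) = 3b(n-1) + 1$.

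The key structural lemma: if $\pi$ is a $p$-avoider and $\pi_1 \notin \{n-1, n\}$, then $n-1$ and $n$ must sit in adjacent positions of $\pi$. Indeed, if $\pi_j = n$ and $\pi_k = n-1$ with $|j - k| \ge 2$ and $\pi_1 \le n-2$, then $v := \pi_{\min(j,k)+1}$ is necessarily $\le n-2$, and the $4$-subsequence at positions $1, \min(j,k), \min(j,k)+1, \max(j,k)$ is an occurrence of $p$, since the three required inequalities $\pi_{i_1} < \pi_{i_4}$, $\pi_{i_3} < \pi_{i_4}$, $\pi_{i_3} < \pi_{i_2}$ all follow from $\pi_1, v \le n-2 < n-1 < n$. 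I also record the standard observation used throughout this section: if $\pi_j = n$ with $j \ge 3$, then $\pi_2 < \pi_3 < \cdots < \pi_{j-1}$, for otherwise a descent among positions $2, \ldots, j-1$ together with $i_1 = 1$ and $i_4 = j$ produces an occurrence.

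Using the lemma I partition $b(n)$ into four cases: (A1) $\pi_1 = n-1, \pi_2 = n$; (A2) $\pi_1 = n-1, \pi_2 \neq n$; (B1) $\pi_1 \notin \{n-1, n\}$ with $n$ immediately preceding $n-1$; (B2) $\pi_1 \notin \{n-1, n\}$ with $n-1$ immediately preceding $n$. Removing $n$ yields bijections giving $|B1| = |B2| = b(n-1)$; in case (A1), removing $n$ leaves a $p$-avoider of length $n-1$ beginning with $n-1$, and further removing that leading $n-1$ gives $|A1| = a(n-2)$. Hence
\[
b(n) = a(n-2) + |A2| + 2b(n-1),
\]
and the remaining task is to show $|A2| = a(n-1) - 2a(n-2) + 1$.

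The map that deletes $\pi_1 = n-1$ and relabels $n$ as $n-1$ in the tail identifies (A2) with the set of $\pi' \in a(n-1)$ satisfying $\pi'_1 < \pi'_2$; note that this condition automatically forces $\pi'_1 \neq n-1$, and that (by the standard observation applied to $\pi'$) the only possible descent in the prefix of $\pi'$ preceding its maximum lies between positions $1$ and $2$. So the target identity reduces to proving $\#\{\pi' \in a(n-1) : \pi'_1 > \pi'_2\} = 2a(n-2) - 1$, or equivalently (after subtracting the $a(n-2)$ contribution from permutations with $\pi'_1 = n-1$) that $\#\{\pi' \in b(n-1) : \pi'_1 > \pi'_2\} = a(n-2) - 1$. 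This last identity is the technical heart of the proof and is where I expect the main obstacle to lie: the cleanest route is likely a secondary induction mirroring the $b(n) = a(n-2) - 1$ argument in the proof of Theorem~\ref{thm-N-pattern-1111}, organized by a careful case analysis on the position of $n-1$ in the auxiliary family $\{\pi' \in b(n-1) : \pi'_1 > \pi'_2\}$, which must interact delicately with both structural constraints derived above.
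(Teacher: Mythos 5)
Your scaffolding is correct as far as it goes: the reduction $a(n)=a(n-1)+b(n)$ with $b(n)$ counting avoiders having $\pi_1\neq n$, the adjacency lemma for $n-1$ and $n$ when $\pi_1\le n-2$, the observation that $\pi_2<\cdots<\pi_{j-1}$ when $\pi_j=n$, the split of $b(n)$ into (A1), (A2), (B1), (B2), the counts $|A1|=a(n-2)$ and $|B1|=|B2|=b(n-1)$, and the identification of (A2) with the $p$-avoiders $\pi'$ of length $n-1$ satisfying $\pi'_1<\pi'_2$ all check out for this labelling of the N-poset (the relations are $\pi_{i_1}<\pi_{i_4}$, $\pi_{i_3}<\pi_{i_4}$, $\pi_{i_3}<\pi_{i_2}$, and each removal map you use is indeed a $p$-equivalence). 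The genuine gap is that the argument stops exactly where the content of the theorem lies: the identity $\#\{\pi'\in S_{n-1}(p):\pi'_1\neq n-1,\ \pi'_1>\pi'_2\}=a(n-2)-1$ is not proved but only flagged as ``the technical heart,'' with a hope that an induction mirroring Theorem~\ref{thm-N-pattern-1111} will work. Given your (correct) bookkeeping, that identity at index $n-1$ is \emph{equivalent} to the recurrence $a(n)=4a(n-1)-3a(n-2)+1$ at index $n$, so as written the proposal only reformulates the theorem; moreover the pattern of Theorem~\ref{thm-N-pattern-1111} is a different N-pattern, so its $b(n)=a(n-2)-1$ analysis cannot be invoked verbatim, and you do not exhibit the case analysis (a recurrence for your residual family in terms of $a$ and of itself, plus an induction interleaved with the main recurrence) that would actually close the loop.

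For comparison, the paper avoids this residual count by decomposing from the bottom rather than the top: it classifies avoiders by the last entry and the values $1$ and $2$ (which must be adjacent unless one of them is last), introduces $b(n)$ as the number of avoiders with $\pi_n\neq 1$ in which every element to the left of $1$ exceeds every element to the right of $1$, and derives the coupled recurrences $a(n)=3a(n-1)-a(n-2)+b(n-1)$ and $b(n)=2b(n-1)+a(n-2)$; since this is the same system as for $c(n)$ in Theorem~\ref{thm-two-N-patterns}, the identity $b(n)=a(n)-2a(n-1)+1$ follows by the induction already carried out there, and the recurrence drops out. If you wish to keep your top-down decomposition (which is a legitimately different route, since this pattern is not fixed by reverse-complement), the missing step is precisely an analogue of that coupling: find a removal-based recurrence for (A2), or for $\{\pi':\pi'_1\neq n-1,\ \pi'_1>\pi'_2\}$, in terms of $a$ and itself, and prove the closed form $a(n-2)-1$ by induction alongside the main recurrence. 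Until that is supplied, the proof is incomplete.
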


\begin{proof} Let $b(n)$ be the number of $p$-avoiding  permutations $\pi_1\cdots \pi_n$ such that $\pi_n\neq 1$  and each element to the left of the element 1 (if any) is larger than any element to the right of 1. 

Let us compute $a(n)$. Suppose that $\pi=\pi_1\cdots\pi_n$ is $p$-avoiding. If $\pi_n=1$ or $\pi_n=2$ then $\pi_n$ cannot be involved in an occurrence of $p$. Hence there are $2a(n-1)$ such permutations. Further, if $\pi_n\neq 1$ and $\pi_n\neq 2$ and the elements 1 and 2 are not next to each other in $\pi$, then clearly there is an occurrence of $p$ in $\pi$ involving $1$, $2$ and $\pi_n$, which is impossible.  Consider two cases.

\begin{itemize}
\item Suppose that $\pi_{i-1}=1$ and $\pi_i=2$ for some $i\in [2,n-1]$. Then the permutation $\pi'=\pi'_1\cdots\pi'_{n-1}$ obtained from $\pi$ by removing the element 1 and decreasing all other elements by 1 is $p$-equivalent to $\pi$. Indeed, if  the element 1 plays the role of {\boldmath $1$} in an occurrence of $p$ in $\pi$ then the element $2$ cannot play the role of {\boldmath $2$} in that occurrence since there are no candidates for an element to play the role of {\boldmath $3$} in this case. On the other hand, if the element 1 plays the role of {\boldmath $3$} in an occurrence of $p$ in $\pi$ then the element 2 cannot play the role of {\boldmath  $4$} in that occurrence since there is no candidate to play the role of {\boldmath $1$} (which would have to be less than 2). Hence, in both cases, any time the element 1 is involved in an occurrence of $p$, the element 2 may be involved in the same occurrence of $p$ in $\pi'$. We have $a(n-1)-a(n-2)$ such permutations since in $\pi'_{n-1}\neq 1$. 
\item Suppose that $\pi_{i-1}=2$ and $\pi_i=1$ for $2\leq i<n$. If there exist $k<i-1$ and $j>i$ such that $\pi_k<\pi_j$ then the elements $\pi_k$, $2$, $1$ and $\pi_j$ would form an occurrence of $p$, which is impossible. If such $k<i-1$ and $j>i$ do not exist, then the permutation obtained from $\pi$ by removing the element 1 and reducing every other element by 1 is $p$-equivalent to $\pi$. Indeed, if the element 1 plays the role of {\boldmath $1$} in an occurrence of $p$ then 2 also plays the role of {\boldmath $1$} in the same occurrence of $p$ in $\pi'$. Also, 1 cannot play the role of {\boldmath $3$} since there is no candidate for the role of {\boldmath $1$} in that case (which would have to be less than the element playing the role of {\boldmath $4$}). We have $b(n-1)$ such permutations.
\end{itemize}
From our considerations above, we see that 
\begin{equation}\label{last-added-eq-for-an}
a(n)=3a(n-1)-a(n-2)+b(n-1).
\end{equation}
Next, we computer $b(n)$. Suppose that $\pi=\pi_1\cdots\pi_n$ is $p$-avoiding, $\pi_i=1$ for some $i\in [1,n-1]$ and for all $0\leq k<i<j\leq n$ we have $\pi_k>\pi_j$ where $\pi_0:=\infty$. Let $\pi_j=2$ then by definition $j>i$. We consider three cases.
\begin{itemize}
\item $i=n-1$, $j=n$. Clearly, the permutation obtained by removing 1 and 2 from $\pi$ and reducing every other element by 2 is $p$-equivalent to $\pi$ (none of 1 and 2 can play the role of {\boldmath $4$} in an occurrence of $p$). Therefore, we have $a(n-2)$ such permutations. 
\item $i<n-1$, $j\in[i+2,n-1]$. This case is impossible since then the elements $1$, $\pi_{i+1}$, $2$, $\pi_n$ form an occurrence of $p$. 
\item $i<n-1$, $j\in\{i+1,n\}$.  It is easy to see that the permutation obtained by removing the element 2 from $\pi$ and reducing each element in $[3,n]$ by 1 is $p$-equivalent to $\pi$. Hence, we have $2b(n-1)$ such permutations.   
\end{itemize}
Summarising the three cases above, we have
$$b(n)=2b(n-1)+a(n-2).$$
The recursion for $b(n)$ is the same as that for $c(n)$ in the proof of Theorem~\ref{thm-two-N-patterns}, so we can conclude from that proof that $b(n)=a(n)-2a(n-1)+1$, which gives us the desired recursion for $a(n)$ from (\ref{last-added-eq-for-an}). 
\end{proof}

\subsection{N-patterns in Wilf-equivalence class 2 in Table~\ref{N-patterns-tab}}\label{Wilf-class-2-subsec}

\begin{thm}\label{thm-1423} 
For the POP $p=$ \hspace{-3.5mm}
\begin{minipage}[c]{3.8em}\scalebox{1}{
\begin{tikzpicture}[scale=0.3]
\draw [line width=1](0,0)--(0,1)--(1,0)--(1,1);
\draw (0,0) node [scale=0.3, circle, draw,fill=black]{};
\draw (0,1) node [scale=0.3, circle, draw,fill=black]{};
\draw (1,0) node [scale=0.3, circle, draw,fill=black]{};
\draw (1,1) node [scale=0.3, circle, draw,fill=black]{};
\node [left] at (0,-0.1){\small$1$};
\node [left] at (0,1.1){\small$4$};
\node [right] at (1,-0.1){\small$2$};
\node [right] at (1,1.1){\small$3$};
\end{tikzpicture}
}\end{minipage}
we have $a(0)=a(1)=1$ and, for $n\geq 2$, $a(n) = 4a(n-1) - 3a(n-2) + a(n-3)$, hence giving the formula for $a(n)$ and the g.f.\ in the statement of Theorem~\ref{thm-11}.
\end{thm}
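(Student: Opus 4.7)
I would prove Theorem~\ref{thm-1423} by conditioning on the position $i$ of the maximum element $n$ in a $p$-avoiding permutation $\pi=\pi_1\cdots\pi_n$, where $p=\mathbf{1423}$. Since $n$ is the overall maximum, it can only play role $\mathbf{3}$ or role $\mathbf{4}$ within an occurrence of $p$: role $\mathbf{3}$ requires $i\in[3,n-1]$ together with a witness $\pi_{i_4}$ exceeding the second-smallest value of $\{\pi_1,\ldots,\pi_{i-1}\}$, while role $\mathbf{4}$ requires $i\in[4,n]$ together with an ascent among $\pi_2,\ldots,\pi_{i-1}$.

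For $i\in\{1,2\}$, the element $n$ is inert and deletion yields a $p$-equivalent $(n-1)$-permutation, contributing $2a(n-1)$. For $i=3$, an easy analysis of the possible witnesses shows that $n$ is inert iff $n-1\in\{\pi_1,\pi_2\}$; since $n-1$ is then also inert at its position (its only viable role, $\mathbf{1}$, would require $\pi_{i_4}=n$ at position~$3$ with no room for the intermediate indices $i_2,i_3$), deleting both $n$ and $n-1$ yields a $p$-equivalent $(n-2)$-permutation, contributing $2a(n-2)$. For $i\geq 4$, the joint role-avoidance conditions on $\pi_2\cdots\pi_{i-1}$ and on the relative sizes of prefix and suffix values force the prefix values to be either $\{n-i+1,\ldots,n-1\}$ or $\{k\}\cup\{n-i+2,\ldots,n-1\}$ for a unique ``small exception'' $k\in\{1,\ldots,n-i\}$. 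The resulting subcases---depending on where $k$ sits inside the prefix, and on whether $k$ can serve as the $\mathbf{1}$-role of an occurrence whose $\mathbf{2},\mathbf{3},\mathbf{4}$-roles all lie in the suffix---are then enumerated with the help of an auxiliary count tracking suffix arrangements that avoid a threshold-constrained form of the $123/132$ pattern.

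Summing the contributions from all $i$ and simplifying via induction on $n$ (or equivalently by computing generating functions and matching against the known one $(1-3x+x^2)/(1-4x+3x^2-x^3)$ from Theorem~\ref{thm-11}) yields the desired three-term recurrence $a(n)=4a(n-1)-3a(n-2)+a(n-3)$. The principal obstacle is the bookkeeping for the subcase $\pi_1=k$ in the hard case $i\geq 4$: the threshold on the admissible suffix $d$-values depends on $k$, so the sum over the possible small exceptions $k$ only telescopes cleanly once the auxiliary sequence is carefully set up and its own closed form (analogous to the $2^{m-1}$ count of $\{123,132\}$-avoiders) is combined with the main recurrence.
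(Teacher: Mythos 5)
Your reduction of the easy cases is sound: since $n$ can only take role $\mathbf{3}$ or $\mathbf{4}$, it is inert in positions $1$ and $2$ (giving $2a(n-1)$), and when $n$ is in position $3$ a $p$-avoider must have $n-1\in\{\pi_1,\pi_2\}$, both $n$ and $n-1$ are then inert, giving $2a(n-2)$. Your structural description of the case $i\ge 4$ is also correct: $\pi_2>\cdots>\pi_{i-1}$ must be decreasing, every value after position $i$ must lie below the second-smallest prefix value, so the prefix value set is $\{n-i+1,\ldots,n-1\}$ or $\{k\}\cup\{n-i+2,\ldots,n-1\}$ with $k\le n-i$, and in the latter case the suffix must in addition contain no triple $j_2<j_3<j_4$ with $\pi_{j_2}<\min(\pi_{j_3},\pi_{j_4})$ and $\pi_{j_4}>k$ (incidentally, the position of $k$ inside the prefix is irrelevant, so the subcase split you announce there does not actually arise). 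The genuine gap is that the argument stops exactly where the work is. The contribution of a fixed position $i\ge 4$ is $(i-1)\bigl[a(n-i)+\sum_{k=1}^{n-i}g(n-i,\,n-i+1-k)\bigr]$, where $g(m,t)$ counts permutations of an $m$-element set with $t$ designated top values that avoid both $p$ and the threshold pattern just described. You never define this auxiliary count precisely, never derive a recurrence or closed form for it (the $\{123,132\}$ analogy only covers the extreme case $t=m$, where $g(m,m)=2^{m-1}$), and never carry out the weighted double sum over $i$ and $k$ and show it equals $2a(n-1)-5a(n-2)+a(n-3)$, which is what the claimed recurrence demands once the easy cases are subtracted. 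Asserting that everything ``telescopes cleanly once the auxiliary sequence is carefully set up'' is precisely the statement that needs proof, and you flag it yourself as the unresolved obstacle; as written, this is a plan rather than a proof.

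For comparison, the paper avoids the position-dependent bookkeeping entirely: it notes that $\{\pi_1,\pi_2\}\cap\{n-1,n\}\neq\emptyset$, disposes of the cases $n\in\{\pi_1,\pi_2\}$ and $\pi_2=n-1$ directly to get $a(n)=3a(n-1)-a(n-2)+b(n)$, where $b(n)$ counts avoiders with $\pi_1=n-1$ and $\pi_2\neq n$, and then a short case analysis on the positions of $n$ and $n-2$ yields $b(n)=b(n-1)+a(n-2)=\sum_{i=1}^{n-2}a(i)$, from which the three-term recurrence follows by differencing. Your route is viable (it reproduces $a(4)=19$ and $a(5)=60$), but to complete it you would have to set up and solve the two-parameter enumeration $g(m,t)$ and then perform the summation over $i$ and $k$ explicitly, which is substantially more work than the paper's conditioning on which of $n,n-1$ occupies the first two positions.
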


\begin{proof} Let $\pi=\pi_1\cdots\pi_n$ be a $p$-avoiding $n$-permutation. Clearly, $\{\pi_1,\pi_2\}\cap\{n-1,n\}\neq\emptyset$ as otherwise there would be an occurrence of $p$ in $\pi$. Moreover, if $\pi_1=n$ or $\pi_2=n$ then $n$ cannot be involved in an occurrence of $p$ in $\pi$, and hence there are $2a(n-1)$ such permutations. Also, if $\pi_2=n-1$ and $\pi_1\neq n$ then removing $n-1$ from $\pi$ and decreasing $n$ by 1 we obtain a $p$-equivalent permutation $\pi'=\pi'_1\cdots\pi'_{n-1}$, since $n-1$ cannot play the role of {\boldmath $2$} in an occurrence of $p$, while if $n-1$ plays the role of {\boldmath $1$} in an occurrence of $p$ then $\pi_1$ also plays the role of {\boldmath $1$} in an occurrence of $p$ in $\pi'$.  Since $\pi'_1\neq n-1$ we have $a(n-1)-a(n-2)$ such permutations. 

We let $b(n)$ be the number of $p$-avoiding $n$-permutations $\pi$ such that $\pi_1=n-1$ and $\pi_2\neq n$. Then
$$a(n)=3a(n-1)-a(n-2)+b(n).$$
Let us compute $b(n)$. Suppose that $\pi$ is a permutation counted by $b(n)$ such that $\pi_1=n-1$, $\pi_i=n$ for $i>2$ and $\pi_j=n-2$. We consider five possible cases.
\begin{itemize}
\item If $j=2$ and $i=3$ then removing from $\pi$ the elements $n-2$, $n-1$, $n$,  we obtain a $p$-equivalent permutation (none of these elements can play the role of {\boldmath $1$} in an occurrence of $p$). Hence, we have $a(n-3)$ permutations in this case.
\item If $j=2$ and $i>3$ then the permutation $\pi'$ obtained from $\pi$ by removing $n-1$ and decreasing $n$ by 1 is $p$-equivalent to $\pi$. Indeed, if the element $n-1$ plays the role of {\boldmath $1$} in an occurrence of $p$ in $\pi$ then the element $n-2$ cannot play the role of {\boldmath $2$}, and therefore $n-2$ may play the role of {\boldmath $1$} in the same occurrence of $p$ in $\pi'$. Since $i>3$, we have $b(n-1)$ such permutations. 
\item If $2<j<i$ then the elements $n-1$, $\pi_2$, $n-2$, $n$ form an occurrence of $p$, so this case is impossible. 
\item If $3<i<j$ then the elements $\pi_2$, $\pi_3$, $n$, $n-2$ form an occurrence of $p$, so this case is impossible. 
\item If $i=3$ and $j>i$ then removing the elements $n-1$ and $n$ we obtain a $p$-equivalent permutation $\pi'=\pi'_1\cdots\pi'_{n-1}$. Indeed, $n-1$ cannot play the role of {\boldmath $1$} in an occurrence of $p$ in $\pi$  because $n$ cannot play the role of {\boldmath $4$}, and since $n-1$ is not involved in an occurrence of $p$ then $n$ also cannot be involved in an occurrence of $p$. Since $\pi'_1\neq n-2$, we have $a(n-2)-a(n-3)$ such permutations.  
\end{itemize}
Summarizing the cases, we have
$$b(n)=a(n-3)+b(n-1)+ a(n-2)-a(n-3)= b(n-1)+ a(n-2)$$
$$= b(n-2)+ a(n-2)+a(n-3)=\cdots
= \sum_{i=1}^{n-2}a(i).$$
Hence, $$a(n)= 3a(n-1)-a(n-2)+b(n)=  3a(n-1)+ \sum_{i=1}^{n-3}a(i)$$ and
$a(n)-a(n-1)= 3a(n-1)- 3a(n-2) +a(n-3)$ giving the desired result.
\end{proof}

\subsection{N-patterns in Wilf-equivalence class 3 in Table~\ref{N-patterns-tab}}\label{Wilf-class-3-subsec}

\begin{thm}\label{thm-new-wilf-equiv} For the N-bipartite POP $p=$\hspace{-3.5mm}
\begin{minipage}[c]{3.8em}\scalebox{1}{
\begin{tikzpicture}[scale=0.3]
\draw [line width=1](0,0)--(0,1)--(1,0)--(1,1);
\draw (0,0) node [scale=0.3, circle, draw,fill=black]{};
\draw (0,1) node [scale=0.3, circle, draw,fill=black]{};
\draw (1,0) node [scale=0.3, circle, draw,fill=black]{};
\draw (1,1) node [scale=0.3, circle, draw,fill=black]{};
\node [left] at (0,-0.1){\small$3$};
\node [left] at (0,1.1){\small$1$};
\node [right] at (1,-0.1){\small$4$};
\node [right] at (1,1.1){\small$2$};
\end{tikzpicture}
}\end{minipage}  \hspace{-0.4cm},  we have $a(n)=n!$ for $n\leq 3$ and, for $n\geq 4$, $a(n)=3a(n-1)+a(n-2)-a(n-3)$.
\end{thm}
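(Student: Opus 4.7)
The plan is to decompose $p$-avoiding $n$-permutations by the position $k$ of the element $n$. Reading off the N-shape, an occurrence of $p$ in $\pi$ is a subsequence $\pi_{i_1}\pi_{i_2}\pi_{i_3}\pi_{i_4}$ satisfying $\pi_{i_1}>\pi_{i_3}$, $\pi_{i_1}>\pi_{i_4}$, and $\pi_{i_2}>\pi_{i_4}$. Since $n$ is the maximum entry, it can only play the role of {\boldmath $1$} (requiring $k\leq n-3$) or {\boldmath $2$} (requiring $2\leq k\leq n-2$). In particular, if $k\in\{n-1,n\}$ then $n$ does not participate in any occurrence, so removing it is a $p$-equivalence, and these two cases contribute $2a(n-1)$.

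For the remaining range $k\leq n-2$, the key structural claim is that $n-1\in\{\pi_{n-1},\pi_n\}$: if $n-1$ sat at any other position $j$, then $n$ and $n-1$ (in whichever order) would play roles {\boldmath $1$} and {\boldmath $2$} of an occurrence completed by any two entries to the right of $\max(j,k)\leq n-2$, contradicting avoidance. This splits the remaining count into $C_2=\{\pi_n=n-1\}$ and $C_1=\{\pi_{n-1}=n-1\}$. For $C_2$, deleting $\pi_n$ and reducing $n\mapsto n-1$ is a bijection with $p$-avoiding $(n-1)$-permutations having $n-1$ not at the last position; the inverse just appends $n-1$, which at the last position could only play role {\boldmath $4$}, requiring two larger predecessors, impossible. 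Hence $|C_2|=a(n-1)-a(n-2)$.

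For $C_1$ I would separate the subcase $k=n-2$ from $k\leq n-3$. When $k=n-2$, deleting the adjacent pair $\pi_{n-2}\pi_{n-1}=n(n-1)$ gives a bijection with all $p$-avoiding $(n-2)$-permutations, contributing $a(n-2)$. When $k\leq n-3$, two structural consequences of avoidance must be derived first: (i) the fact that $n$ can play {\boldmath $1$} forces the suffix $\pi_{k+1}\cdots\pi_n$ to satisfy $\pi_j<\pi_{j'}$ whenever $j'-j\geq 2$; (ii) the fact that $n$ can play {\boldmath $2$} forces the prefix $\pi_1\cdots\pi_{k-1}$ to be a permutation of $[k]\setminus\{y\}$ for some $y\in[k]$. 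Together with $\pi_{n-1}=n-1$, these force $\pi_n=n-2$.

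With this structure in hand, I would biject the subcase $k\leq n-3$ of $C_1$ with $p$-avoiding $(n-2)$-permutations in which $n-2$ is not at the last position, via $\pi\mapsto\pi''$ obtained by deleting positions $n-1,n$ and relabeling $n\mapsto n-2$. The hard step is verifying that the inverse map---unreduce $n-2\mapsto n$ and append $(n-1)(n-2)$---produces a $p$-avoiding permutation: case analysis on the role potentially played by $\pi_{n-1}=n-1$ or $\pi_n=n-2$ in a would-be occurrence eliminates every possibility, using crucially that every entry in positions $k+1,\ldots,n-2$ is at most $n-3$. This subcase contributes $a(n-2)-a(n-3)$, and summing the four contributions yields
\[
a(n)=2a(n-1)+\bigl(a(n-1)-a(n-2)\bigr)+a(n-2)+\bigl(a(n-2)-a(n-3)\bigr)=3a(n-1)+a(n-2)-a(n-3),
\]
establishing the recurrence, with the base cases $a(n)=n!$ for $n\leq 3$ being immediate since no 4-subsequence exists.
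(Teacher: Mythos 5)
Your proposal is correct, and it reaches the paper's four contributions $2a(n-1)$, $a(n-1)-a(n-2)$, $a(n-2)$, $a(n-2)-a(n-3)$ by a genuinely (if mildly) different decomposition. The paper conditions first on the \emph{values} in the last two positions: either $\pi_n\in\{n-1,n\}$, or $\pi_{n-1}=n$ with $\pi_n<n-1$, or $\pi_{n-1}=n-1$ with $\pi_n<n-1$; in the last case it splits according to the position $j$ of $n-2$ (namely $j=n$, or $\pi_{n-2}=n$ with $j<n-2$), ruling out all other placements of $n-2$ by exhibiting explicit occurrences. You instead condition on the \emph{position} $k$ of the maximum: $k\geq n-1$, then $\pi_n=n-1$, then $\pi_{n-1}=n-1$ split into $k=n-2$ versus $k\leq n-3$; so, for instance, permutations ending in $n\,x$ with $x<n-1$ sit in different cells of the two partitions, and your subcases of the $\pi_{n-1}=n-1$ block ($a(n-2)$ and $a(n-2)-a(n-3)$) are not the same sets as the paper's, only equinumerous. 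The price of your route is the extra structural lemma (suffix increasing at distance $\geq 2$, prefix equal to $[k]$ minus one element, hence $\pi_n=n-2$ when $k\leq n-3$), which replaces the paper's shorter ad hoc impossibility checks; what it buys is that all four counts come from clean deletion/insertion bijections whose inverses are easy to verify, rather than from the auxiliary quantity $b(n-1)$ reused from another case. Both arguments are of the same nature (deletion $p$-equivalences driven by the placement of the largest entries), and your sketched verifications--including the key one that appending $(n-1)(n-2)$, respectively inserting $n(n-1)$ before the last entry, never creates an occurrence--do go through.
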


\begin{proof} 
Suppose that $\pi=\pi_1\cdots\pi_n$ is a $p$-avoiding $n$-permutation. Clearly, at least one of $\pi_{n-1}$ and $\pi_n$ is $n-1$ or $n$, since otherwise there is an occurrence of $p$ involving $n-1$ and $n$. There are $2a(n-1)$ permutations in question in which $\pi_n\in\{n-1,n\}$. Hence, letting $b(n)$ (resp., $c(n)$) be the number of $p$-avoiding $n$-permutations with $\pi_n<n-1$ and $\pi_{n-1}=n$ (resp., $\pi_{n-1}=n-1$), we have
$$a(n)=2a(n-1)+b(n)+c(n).$$
Let us compute $b(n)$. Clearly, removing $n$ from $\pi$ we obtain a $p$-equivalent permutation in which the maximum element $n-1$ is not the rightmost. Hence, there are $a(n-1)-a(n-2)$ such permutations. 

Next, we compute $c(n)$. We have $\pi_{n-1}=n-1$ and we let $\pi_j=n-2$ and $\pi_i=n$ for $i\neq n$ and $j\leq n$. We consider three cases.

\begin{itemize}
\item $j=n$. In this case, removing $n-1$ and $n-2$ from $\pi$ and reducing $n$ by 2, we get a $p$-equivalent permutation. Indeed, if $n-2$ plays the role of {\boldmath $4$} in an occurrence of $p$ in $\pi$ then $n$ and $n-1$ must play the roles of {\boldmath $1$} and {\boldmath $2$}, but there is no candidate for the role of {\boldmath $3$}. Also, clearly, $n-1$ cannot play the role of {\boldmath $4$} in an occurrence of $p$ in $\pi$. Hence, there are $a(n-2)$ permutations in this case. 
\item $i=n-2$ and $j<n-2$. In this case, removing $n$ from $\pi$ we get a $p$-equivalent permutation $\pi'=\pi'_1\cdots\pi'_{n-1}$. Indeed, $n$ cannot play the roles of {\boldmath $3$} or {\boldmath $4$} in an occurrence of $p$ in $\pi$, and if $n$ plays the role of {\boldmath $2$}, then $n-1$ must play the role of {\boldmath $3$}, but we have no candidate for the role of {\boldmath $1$}. Also, $\pi'_{n-2}=n-1$ and $\pi'_{n-1}\neq n-2$, so we have $b(n-1)=a(n-2)-a(n-3)$ permutations in this case.
\item $i<n-2$ and $j<n-1$. In this case $\pi$ contains an occurrences of $p$. Indeed, if $i<j$ then $n$, $n-2$, $n-1$, $\pi_n$ form an occurrence of $p$, while if $j<i$ then $n-2$, $n$, $\pi_{n-2}$, $\pi_n$ form an occurrence of $p$. 
\end{itemize} 
Summarising the cases above, $c(n)=2a(n-2)-a(n-3)$ and
$$a(n)=2a(n-1)+b(n)+c(n) =2a(n-1)+ a(n-1)-a(n-2) + 2a(n-2)-a(n-3)$$ 
$$= 3a(n-1)+a(n-2)-a(n-3),$$
as desired.
\end{proof}

\section{POPs defined by disjoint chains}\label{sticks-sec}

The minimal disconnected non-trivial bipartite graph $K_2\cup K_1$ corresponds, up to complementing the labels and taking the mirror image with respect to a horizontal line, to two non-Wilf-equivalent POPs  $p_1=$\hspace{-3.5mm}
\begin{minipage}[c]{3.8em}\scalebox{1}{
\begin{tikzpicture}[scale=0.3]
\draw [line width=1](0,0)--(0,1);
\draw (0,0) node [scale=0.3, circle, draw,fill=black]{};
\draw (0,1) node [scale=0.3, circle, draw,fill=black]{};
\draw (1,0) node [scale=0.3, circle, draw,fill=black]{};
\node [left] at (0,-0.1){\small$1$};
\node [left] at (0,1.1){\small$2$};
\node [right] at (1,-0.1){\small$3$};
\end{tikzpicture}
}\end{minipage}
and $p_2=$\hspace{-3.5mm}
\begin{minipage}[c]{3.8em}\scalebox{1}{
\begin{tikzpicture}[scale=0.3]
\draw [line width=1](0,0)--(0,1);
\draw (0,0) node [scale=0.3, circle, draw,fill=black]{};
\draw (0,1) node [scale=0.3, circle, draw,fill=black]{};
\draw (1,0) node [scale=0.3, circle, draw,fill=black]{};
\node [left] at (0,-0.1){\small$1$};
\node [left] at (0,1.1){\small$3$};
\node [right] at (1,-0.1){\small$2$};
\end{tikzpicture}
}\end{minipage} for which the avoidance formulas $a_1(n)=n$ and $a_2(n)=F_n$, the $n$-th Fibonacci number, are well-known and easy to derive.

In this section we prove some interesting results concerning a generalization of the POP $p_1$. We introduce the notion of a {\em disjoint chains POP}, or {\em DC POP}, that is a POP defined by disjoint chains. DC POPs extend the notion of {\em multi-patterns} introduced in \cite{Kit1} to the case of non-consecutive patterns. A DC POP $[\sigma_1,\ldots,\sigma_m]$ of length $k$, where red$(\sigma_i)\in S_{k_i}$ and $\sum_{i=1}^{m}k_i=k$, is defined by $m$ disjoint chains so that the $i$-th path is labeled by $\{1+\sum_{j=1}^{i-1}k_j,\ldots,\sum_{j=1}^{i}k_j\}$ giving the permutation $\sigma_i$ when reading from top to bottom. For example, [12,43,65] and [231,54,678,(11)9(10)] correspond, respectively, to the POPs

\begin{center}
\begin{tabular}{ccc}
\hspace{-3.5mm}
\begin{minipage}[c]{3.5em}\scalebox{1}{
\begin{tikzpicture}[scale=0.5]

\draw [line width=1](0,-0.5)--(0,0.5);
\draw [line width=1](1.2,-0.5)--(1.2,0.5);
\draw [line width=1](2.5,-0.5)--(2.5,0.5);

\draw (0,-0.5) node [scale=0.4, circle, draw,fill=black]{};
\draw (0,0.5) node [scale=0.4, circle, draw,fill=black]{};
\draw (1.2,0.5) node [scale=0.4, circle, draw,fill=black]{};
\draw (1.2,-0.5) node [scale=0.4, circle, draw,fill=black]{};
\draw (2.5,0.5) node [scale=0.4, circle, draw,fill=black]{};
\draw (2.5,-0.5) node [scale=0.4, circle, draw,fill=black]{};

\node [left] at (0.9,0.6){${\small 1}$};
\node [left] at (0.9,-0.6){${\small 2}$};
\node [right] at (1.2,0.6){${\small 4}$};
\node [right] at (1.2,-0.6){${\small 3}$};
\node [right] at (2.5,0.6){${\small 6}$};
\node [right] at (2.5,-0.6){${\small 5}$};

\end{tikzpicture}
}\end{minipage}
& 
\ \ \ and
&

\hspace{-3.5mm}
\begin{minipage}[c]{3.5em}\scalebox{1}{
\begin{tikzpicture}[scale=0.5]

\draw [line width=1](0,-1.5)--(0,0.5);
\draw [line width=1](1.5,-1.5)--(1.5,-0.5);
\draw [line width=1](3,-1.5)--(3,0.5);
\draw [line width=1](4.5,-1.5)--(4.5,0.5);

\draw (0,-0.5) node [scale=0.4, circle, draw,fill=black]{};
\draw (0,0.5) node [scale=0.4, circle, draw,fill=black]{};
\draw (0,-1.5) node [scale=0.4, circle, draw,fill=black]{};
\draw (1.5,-1.5) node [scale=0.4, circle, draw,fill=black]{};
\draw (1.5,-0.5) node [scale=0.4, circle, draw,fill=black]{};
\draw (3,-0.5) node [scale=0.4, circle, draw,fill=black]{};
\draw (3,0.5) node [scale=0.4, circle, draw,fill=black]{};
\draw (3,-1.5) node [scale=0.4, circle, draw,fill=black]{};
\draw (4.5,-0.5) node [scale=0.4, circle, draw,fill=black]{};
\draw (4.5,0.5) node [scale=0.4, circle, draw,fill=black]{};
\draw (4.5,-1.5) node [scale=0.4, circle, draw,fill=black]{};

\node [left] at (1,-0.6){${\small 3}$};
\node [left] at (1,0.6){${\small 2}$};
\node [left] at (1,-1.6){${\small 1}$};

\node [right] at (1.5,-0.6){${\small 5}$};
\node [right] at (1.5,-1.6){${\small 4}$};

\node [left] at (4,0.6){${\small 6}$};
\node [left] at (4,-0.6){${\small 7}$};
\node [left] at (4,-1.6){${\small 8}$};

\node [left] at (5.9,0.6){${\small 11}$};
\node [left] at (5.6,-0.6){${\small 9}$};
\node [left] at (5.9,-1.6){${\small 10}$};

\end{tikzpicture}
}\end{minipage}
\end{tabular}
\end{center}

\noindent
Clearly, a DC POP with each $\sigma_i$ being of length 2 is a bipartite POP.

In this section we deal with the exponential generating function (e.g.f.) for a sequence $a(n)$, which is $A(x)=\sum_{n\geq 0}a(n)\frac{x^n}{n!}$. A permutation $\pi=\pi_1\cdots\pi_n$ {\em quasi-avoids} a POP $p$ if it contains at least one occurrence of $p$ but the permutation red($\pi_1\cdots\pi_{n-1}$) contains no occurrences of $p$. For example, the permutation $\pi=416532$ quasi-avoids the complete bipartite POP $p=(\{1,2\}/\{3,4\})$ since $\pi$ contains three occurrences of $p$ while the permutation red(41653)=31542 avoids $p$. We let $a^{\star}(n)$ denote the number of $n$-permutations quasi-avoiding the pattern in question and $A^{\star}(x)$ is the respective e.g.f.. 
The notion of quasi-avoidance was first introduced in \cite{Kit1} for so-called {\em consecutive patterns} (that is, patterns whose occurrences form contiguous subsequences in permutations). Essentially copy/pasting the arguments from \cite{Kit1} for consecutive patterns, we can derive the following results. 

\begin{prop}\label{propQuasi} Let $p$ be a POP and $A(x)$ (resp. $A^{\star}(x)$) is the e.g.f. for the number of permutations that avoid (resp. quasi-avoid) $p$. Then, 
$$A^{\star}(x) = (x-1)A(x) + 1.$$ \end{prop}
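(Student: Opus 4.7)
The plan is to establish a simple counting identity linking $a(n)$, $a^{\star}(n)$ and $a(n-1)$, and then translate it to exponential generating functions in one line. The key combinatorial observation is that every length-$n$ permutation $\pi$ is uniquely determined by the pair $(\sigma, k)$ where $\sigma = \mathrm{red}(\pi_1\cdots\pi_{n-1}) \in S_{n-1}$ and $k = \pi_n \in [n]$: given such a pair, one obtains $\pi$ by raising every entry of $\sigma$ that is at least $k$ by $1$ and appending $k$.

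First, I would fix a permutation $\sigma \in S_{n-1}$ that avoids $p$ and look at the $n$ length-$n$ permutations obtained from $\sigma$ by the $n$ possible values of the last entry. By hereditariness of POP containment, each such $\pi$ either avoids $p$, or contains an occurrence of $p$ — and in the latter case that occurrence must involve $\pi_n$ (since $\mathrm{red}(\pi_1\cdots\pi_{n-1}) = \sigma$ is $p$-avoiding), so $\pi$ quasi-avoids $p$ by definition. Conversely, if $\pi$ is either $p$-avoiding or $p$-quasi-avoiding, then $\mathrm{red}(\pi_1\cdots\pi_{n-1})$ is $p$-avoiding. Summing over $\sigma$ gives the identity
\begin{equation}\label{key-id}
a(n) + a^{\star}(n) = n\cdot a(n-1) \qquad (n\geq 1),
\end{equation}
with the conventions $a(0)=1$ and $a^{\star}(0)=0$.

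Finally, I would multiply \eqref{key-id} by $x^n/n!$ and sum over $n\ge 1$. The left-hand side gives $A(x)-1+A^{\star}(x)$, while the right-hand side gives
$$\sum_{n\ge 1} n\cdot a(n-1)\frac{x^n}{n!} = x\sum_{n\ge 1} a(n-1)\frac{x^{n-1}}{(n-1)!} = xA(x).$$
Rearranging yields $A^{\star}(x) = (x-1)A(x) + 1$, as required.

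The only real subtlety — and the step most worth stating carefully — is the bijective parametrization of length-$n$ permutations by pairs $(\sigma, \pi_n)$ and the observation that for this parametrization the $p$-avoiding $\pi$ and the $p$-quasi-avoiding $\pi$ partition exactly those pairs whose first coordinate $\sigma$ is $p$-avoiding. Everything else is bookkeeping. This is essentially the same argument used in \cite{Kit1} for consecutive patterns, and the proof carries over verbatim because the proof uses only hereditariness of containment, which holds for arbitrary POPs.
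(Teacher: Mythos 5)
Your proof is correct and follows essentially the same route as the paper: the identity $a(n)+a^{\star}(n)=n\cdot a(n-1)$, obtained by extending $p$-avoiding $(n-1)$-permutations by a final entry in all $n$ ways and splitting the results into avoiders and quasi-avoiders, is exactly the paper's equation, and the passage to exponential generating functions is the same. No gaps; the only difference is that you spell out the bijective parametrization and hereditariness slightly more explicitly than the paper does.
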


\begin{proof} We first show that
\begin{equation}
a^{\star}(n) = n\cdot a(n-1) - a(n).
\label{equation} 
\end{equation}

\noindent
Indeed, extending, in all possible ways, each $(n-1)$-permutation avoiding $p$ to the right by adjoining an element gives $n\cdot a(n-1)$ $n$-permutations. The set of these permutations is a disjoint union of the set of all $n$-permutations that avoid $p$ and the set of all $n$-permutations that quasi-avoid $p$, which gives (\ref{equation}). Multiplying both sides of (\ref{equation}) by $x^n/n!$ and summing over all $n$, as well as observing that $a^{\star}(0) = 0$, we obtain the desired result. \end{proof}

\begin{thm}\label{mainAuxilThm} Let $p$ be an arbitrary POP, and the POP $p'$ is obtained from $p$ by increasing each label by $k$ and adding a disjoint path labeled by a permutation $\sigma$ of $\{1,\ldots,k\}$ (from top to bottom). Let $C(x)$ (resp., $A(x)$, $B(x)$) be the e.g.f. for the number of permutations avoiding $p'$ (resp., $\sigma$, $p$). 
Then,
$$C(x)=A(x) + B(x)A^{\star}(x).$$ \end{thm}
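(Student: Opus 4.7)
My plan is to partition the $p'$-avoiding $n$-permutations by whether or not they contain the chain-pattern $\sigma$. Since every occurrence of $p'$ embeds an occurrence of $\sigma$ in the positions corresponding to the labels $\{1,\ldots,k\}$, any $\pi$ avoiding $\sigma$ automatically avoids $p'$, and such permutations account exactly for the term $A(x)$ in the claimed identity.

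For $\pi$ that does contain $\sigma$, I would condition on the minimal index $j = j(\pi)$ such that $\pi_1 \cdots \pi_j$ already contains an occurrence of $\sigma$. By minimality, $\red(\pi_1 \cdots \pi_j)$ quasi-avoids $\sigma$, and every occurrence of $\sigma$ in $\pi$ ends at some position $\geq j$. The key claim to verify is: \emph{$\pi$ avoids $p'$ if and only if $\pi_{j+1} \cdots \pi_n$ avoids $p$.} One direction is easy --- concatenating any occurrence of $p$ in the suffix with any occurrence of $\sigma$ inside $\pi_1 \cdots \pi_j$ (which necessarily uses $\pi_j$) produces an occurrence of $p'$. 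For the converse, since the labels $\{1,\ldots,k\}$ of the chain and $\{k+1,\ldots,k+|p|\}$ of $p$ lie in distinct components of $p'$, any occurrence of $p'$ in $\pi$ decomposes as an occurrence of $\sigma$ ending at some $i_k \geq j$ followed by an occurrence of $p$ entirely in positions $> i_k$; the latter is then a subsequence of $\pi_{j+1} \cdots \pi_n$, contradicting $p$-avoidance of this suffix.

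Granting the equivalence, the enumeration is direct: for each admissible $j$, I choose the $j$ values filling positions $1,\ldots,j$ in $\binom{n}{j}$ ways, arrange them into a word quasi-avoiding $\sigma$ in $a^\star(j)$ ways (POP-avoidance depends only on relative order, so $a^\star(j)$ applies to any $j$-element value set), and arrange the remaining $n-j$ values into a $p$-avoiding suffix in $b(n-j)$ ways. Summing yields the recurrence
$$c(n) = a(n) + \sum_{j \geq 0}\binom{n}{j}\,a^\star(j)\,b(n-j),$$
whose translation to exponential generating functions is exactly $C(x) = A(x) + B(x)A^\star(x)$.

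The main subtle point is the equivalence above, specifically ruling out that some occurrence of $\sigma$ ending strictly after $j$ could covertly create an occurrence of $p'$ even when $\pi_{j+1} \cdots \pi_n$ avoids $p$. This is handled by the heredity of POP-avoidance: the ``$p$-part'' of any such later-ending occurrence lives as a subsequence inside $\pi_{j+1} \cdots \pi_n$, and a $p$-avoider admits no such subsequence.
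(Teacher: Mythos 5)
Your proposal is correct and follows essentially the same route as the paper: split off the $\sigma$-avoiders (giving $A(x)$), and for the rest use the unique prefix whose reduction quasi-avoids $\sigma$ together with the observation that the remaining suffix must avoid $p$, yielding $c(n)=a(n)+\sum_j\binom{n}{j}a^{\star}(j)b(n-j)$ and hence $C(x)=A(x)+B(x)A^{\star}(x)$. Your write-up merely makes explicit the key equivalence that the paper states without elaboration.
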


\begin{proof} Let $a(n)$, $b(n)$, $c(n)$ be the number of $n$-permutations avoiding the patterns $\sigma$, $p$ and $p'$, respectively. 
If a permutation $\pi=\pi_1\cdots\pi_n$ avoids~$\sigma$ then it avoids~$p'$. Otherwise, $\pi_1\cdots\pi_i$ quasi-avoids $\sigma$ for a unique $i$, $k\leq i\leq n$. But then $\pi_{i+1}\pi_{i+2}\cdots \pi_n$ must avoid $p$. Hence,
\begin{equation}\label{eq-1}
c(n) = a(n) + \displaystyle\sum_{i=0}^{n}{n \choose i}a^{\star}(i)b(n-i)
\end{equation}
since $a^{\star}(i)=0$ for $i=0, \ldots, k-1$. Multiplying both sides of (\ref{eq-1}) by $x^n/n!$ and taking the sum over all~$n$ we get the desired result. \end{proof}

The following corollary of Theorem~\ref{mainAuxilThm} shows that all DC bipartite POPs are Wilf-equivalent. 

\begin{coro}\label{usefulCor}
Let $p=[\sigma_1,\ldots,\sigma_m]$  be a bipartite POP of length $2m$ and $C(x)$ is the e.g.f. for the number of $p$-avoiding permutations. Then,
$$C(x)=\frac{1-(1+(x-1)e^x)^m}{1-x}.$$ 
\end{coro}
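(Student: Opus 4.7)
My plan is to apply Theorem~\ref{mainAuxilThm} iteratively, adjoining one 2-element chain at a time, and to solve the resulting linear recursion in $m$ by recognizing it as a geometric sum that collapses neatly.

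The first step is to compute the building blocks for a single 2-chain. Such a chain amounts to requiring a specific order between the elements in two positions, so avoidance of a 2-chain is equivalent to avoidance of a classical pattern of length 2. In either orientation ($\sigma_i=12$ or $\sigma_i=21$) there is exactly one avoiding $n$-permutation for every $n\geq 0$, hence the associated e.g.f.\ is $A(x)=e^x$. Proposition~\ref{propQuasi} immediately yields the quasi-avoidance e.g.f.\ $A^{\star}(x)=(x-1)e^x+1$. Crucially, both $A(x)$ and $A^{\star}(x)$ are independent of which of the two orientations $\sigma_i$ takes, which is precisely what makes all DC bipartite POPs of length $2m$ Wilf-equivalent and the formula depend only on $m$.

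Next, set $\alpha:=1+(x-1)e^x$ and let $B_m(x)$ denote the e.g.f.\ for the DC bipartite POP of length $2m$. The base case $B_1(x)=A(x)=e^x$ is direct. Applying Theorem~\ref{mainAuxilThm} with $p$ being the $(m-1)$-chain POP (e.g.f.\ $B_{m-1}$) and the appended chain $\sigma$ of length $2$ (e.g.f.\ $A(x)=e^x$, quasi-avoidance $A^{\star}(x)=\alpha$) gives the recursion
\[
B_m(x)=e^x+\alpha\,B_{m-1}(x),\qquad m\geq 2.
\]
Unrolling this recursion from $B_1(x)=e^x$ yields the finite geometric sum
\[
B_m(x)=e^x\bigl(1+\alpha+\alpha^2+\cdots+\alpha^{m-1}\bigr)=e^x\cdot\frac{\alpha^m-1}{\alpha-1}.
\]

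The final step is a short simplification: since $\alpha-1=(x-1)e^x$, the factor $e^x$ cancels and we obtain
\[
B_m(x)=\frac{\alpha^m-1}{x-1}=\frac{1-\bigl(1+(x-1)e^x\bigr)^{m}}{1-x},
\]
which is exactly the claimed expression for $C(x)$. There is no real obstacle here; the content of the proof is entirely in setting up the recursion via Theorem~\ref{mainAuxilThm} and noticing that the chain-avoidance e.g.f.\ is simply $e^x$, after which the calculation is mechanical.
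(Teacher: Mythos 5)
Your proof is correct and follows essentially the same route as the paper: apply Theorem~\ref{mainAuxilThm} chain by chain with $A(x)=e^x$ and, via Proposition~\ref{propQuasi}, $A^{\star}(x)=1+(x-1)e^x$, then resolve the resulting recursion in $m$. The only cosmetic difference is that you unroll the recursion as a geometric sum while the paper phrases the same computation as an induction on $m$.
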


\begin{proof} 
We use Theorem~\ref{mainAuxilThm}, induction on~$m$ and the fact that $A(x)=e^x$ (there is only one $\sigma_i$-avoiding $n$-permutation for each $n\geq 0$) and thus by Proposition~\ref{propQuasi}, $A^{\star}(x)=1+(x-1)e^x$. \end{proof}

Corollary~\ref{usefulCor} can be generalized as follows.

\begin{thm}\label{mainThmForMultiPatterns} Let $p=[\sigma_1,\ldots,\sigma_m]$  be a DC POP  and $A_i(x)$ is the e.g.f. for the number of ${\sigma}_i$-avoiding permutations. Then, the e.g.f.\ $A(x)$ for the number of $p$-avoiding permutations is
$$A(x)=\displaystyle\sum_{i=1}^{m}A_i(x)\displaystyle\prod_{j=1}^{i-1}((x-1)A_j(x) + 1).$$\end{thm}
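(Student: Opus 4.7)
The plan is to prove this by induction on $m$, peeling off the chain labeled by the smallest labels (namely $\sigma_1$) and using Theorem~\ref{mainAuxilThm} together with Proposition~\ref{propQuasi}. The setup of Theorem~\ref{mainAuxilThm} is tailor-made for exactly this peeling operation: it takes a POP on labels shifted up by $k$ and adjoins a disjoint path $\sigma$ on labels $\{1,\ldots,k\}$, which is precisely the relation between $[\sigma_2,\ldots,\sigma_m]$ and $[\sigma_1,\sigma_2,\ldots,\sigma_m]$.

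The base case $m=1$ is immediate, since the empty product equals $1$ and the sole summand $A_1(x)$ is by definition the e.g.f.\ for $\sigma_1$-avoiding permutations. For the inductive step, assume the formula holds for DC POPs of length $m-1$ chains, and consider $p=[\sigma_1,\ldots,\sigma_m]$. Apply Theorem~\ref{mainAuxilThm} with $\sigma=\sigma_1$ (so $A(x)=A_1(x)$) and with the ``old'' POP equal to $[\sigma_2,\ldots,\sigma_m]$ (whose avoidance e.g.f. is unaffected by the label shift required in the theorem, since DC POP avoidance depends only on the ordering within each chain). Denote its e.g.f.\ by $B(x)$, so
\[
A(x)=A_1(x)+B(x)\cdot A_1^{\star}(x),
\]
and use Proposition~\ref{propQuasi} to rewrite $A_1^{\star}(x)=(x-1)A_1(x)+1$.

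By the induction hypothesis applied to $[\sigma_2,\ldots,\sigma_m]$ and re-indexing,
\[
B(x)=\sum_{i=2}^{m}A_i(x)\prod_{j=2}^{i-1}\bigl((x-1)A_j(x)+1\bigr).
\]
Substituting into the displayed expression for $A(x)$ and distributing the factor $(x-1)A_1(x)+1$ into the sum absorbs it as the $j=1$ term of each product, giving
\[
A(x)=A_1(x)+\sum_{i=2}^{m}A_i(x)\prod_{j=1}^{i-1}\bigl((x-1)A_j(x)+1\bigr)=\sum_{i=1}^{m}A_i(x)\prod_{j=1}^{i-1}\bigl((x-1)A_j(x)+1\bigr),
\]
where the $i=1$ term uses the empty-product convention. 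This completes the induction.

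The step that requires a little care (rather than being a hard obstacle) is the justification that the shift of labels by $k_1$ in passing from $[\sigma_2,\ldots,\sigma_m]$ (as a stand-alone DC POP on $\{1,\ldots,k-k_1\}$) to its appearance inside $[\sigma_1,\sigma_2,\ldots,\sigma_m]$ (on $\{k_1+1,\ldots,k\}$) does not change the counting sequence. This follows because chains in a DC POP are pairwise incomparable, so only the relative orderings within each chain constrain occurrences; thus the e.g.f.\ $B(x)$ supplied by the induction hypothesis is the correct input for Theorem~\ref{mainAuxilThm}. The remainder of the argument is straightforward algebraic manipulation of generating functions.
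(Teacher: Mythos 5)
Your proof is correct and follows essentially the same route as the paper: induction on the number of chains, peeling off $\sigma_1$ via Theorem~\ref{mainAuxilThm} and then converting $A_j^{\star}(x)$ to $(x-1)A_j(x)+1$ with Proposition~\ref{propQuasi} (the paper states this in one line; you have simply written out the re-indexing and the absorption of the factor $(x-1)A_1(x)+1$ into the products). Your remark on the label shift is fine, though strictly no justification is needed, since in Theorem~\ref{mainAuxilThm} the quantity $B(x)$ is by definition the avoidance e.g.f.\ of the unshifted POP $[\sigma_2,\ldots,\sigma_m]$.
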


\begin{proof} We use Theorem~\ref{mainAuxilThm} and prove by induction on~$k$ that
$$A(x)=\displaystyle\sum_{i=1}^{m}A_i(x)\displaystyle\prod_{j=1}^{i-1}A^{\star}_j(x).$$
The rest is given by an application of Proposition~\ref{propQuasi}. \end{proof}

\section{Concluding remarks}\label{final-sec}

There is of a host of research directions that can be taken based on our paper, and we will briefly discuss here just some of them. 

Can one generalize/extend Theorems~\ref{t3} and~\ref{t4} dealing with $A=\{i,i+1\}$ and $A=\{i,i+2\}$, respectively, to the case of $A=\{i,i+s\}$ for any $s\geq 1$?

The Wilf-equivalent classes in Table~\ref{N-patterns-tab} raise a number of interesting questions. Indeed, while some of our proofs for the same recurrence relation look quite similar (e.g. those of Theorems~\ref{thm-two-N-patterns} and \ref{former-conj-1}), some others are rather different (e.g. those of Theorems~\ref{thm-two-N-patterns-2} and~\ref{thm-N-pattern-1111}). That would be interesting to find direct (simple) bijections (rather than recursive bijections) explaining (some of) Wilf-equivalences from different rows in Table~\ref{N-patterns-tab}.  

Another natural research direction is in extending our classification and/or enumerative results for N-patterns to POPs defined by 
paths of the forms

\begin{center}
\begin{tabular}{ccccc}
\hspace{-3.5mm}
\begin{minipage}[c]{3.5em}\scalebox{1}{
\begin{tikzpicture}[scale=0.45]

\draw [line width=1](0,-0.5)--(0,0.5);
\draw [line width=1](1,-0.5)--(1,0.5);
\draw [line width=1](2,-0.5)--(2,0.5);
\draw [line width=1](5,-0.5)--(5,0.5);

\draw [line width=1](1,-0.5)--(0,0.5);
\draw [line width=1](2,-0.5)--(1,0.5);
\draw [line width=1](3,-0.5)--(2,0.5);
\draw [line width=1](5,-0.5)--(4,0.5);

\draw (0,-0.5) node [scale=0.4, circle, draw,fill=black]{};
\draw (0,0.5) node [scale=0.4, circle, draw,fill=black]{};
\draw (1,0.5) node [scale=0.4, circle, draw,fill=black]{};
\draw (1,-0.5) node [scale=0.4, circle, draw,fill=black]{};
\draw (2,0.5) node [scale=0.4, circle, draw,fill=black]{};
\draw (2,-0.5) node [scale=0.4, circle, draw,fill=black]{};
\draw (5,0.5) node [scale=0.4, circle, draw,fill=black]{};
\draw (5,-0.5) node [scale=0.4, circle, draw,fill=black]{};

\node [left] at (4.5,0){$\cdots$};

\end{tikzpicture}
}\end{minipage}
& \hspace{1cm} & and
 &  &
\hspace{-3.5mm}\begin{minipage}[c]{3.5em}\scalebox{1}{
\begin{tikzpicture}[scale=0.45]

\draw [line width=1](0,-0.5)--(0,0.5);
\draw [line width=1](1,-0.5)--(1,0.5);
\draw [line width=1](2,-0.5)--(2,0.5);
\draw [line width=1](5,-0.5)--(5,0.5);

\draw [line width=1](1,-0.5)--(0,0.5);
\draw [line width=1](2,-0.5)--(1,0.5);
\draw [line width=1](3,-0.5)--(2,0.5);
\draw [line width=1](5,-0.5)--(4,0.5);
\draw [line width=1](6,-0.5)--(5,0.5);

\draw (0,-0.5) node [scale=0.4, circle, draw,fill=black]{};
\draw (0,0.5) node [scale=0.4, circle, draw,fill=black]{};
\draw (1,0.5) node [scale=0.4, circle, draw,fill=black]{};
\draw (1,-0.5) node [scale=0.4, circle, draw,fill=black]{};
\draw (2,0.5) node [scale=0.4, circle, draw,fill=black]{};
\draw (2,-0.5) node [scale=0.4, circle, draw,fill=black]{};
\draw (5,0.5) node [scale=0.4, circle, draw,fill=black]{};
\draw (5,-0.5) node [scale=0.4, circle, draw,fill=black]{};
\draw (6,-0.5) node [scale=0.4, circle, draw,fill=black]{};

\node [left] at (4.5,0){$\cdots$};

\end{tikzpicture}
}\end{minipage}\hspace{1.6cm}. 
\end{tabular}
\end{center}
In particular, the next natural step is in studing such POPs of length 5 that we call {\em $M$-patterns}. For example, the avoidance sequence for the M-pattern \hspace{-3.5mm}
\begin{minipage}[c]{3.5em}\scalebox{1}{
\begin{tikzpicture}[scale=0.45]

\draw [line width=1](0,-0.5)--(0,0.5);
\draw [line width=1](1.2,-0.5)--(1.2,0.5);
\draw [line width=1](1.2,-0.5)--(0,0.5);
\draw [line width=1](2.5,-0.5)--(1.2,0.5);

\draw (0,-0.5) node [scale=0.4, circle, draw,fill=black]{};
\draw (0,0.5) node [scale=0.4, circle, draw,fill=black]{};
\draw (1.2,0.5) node [scale=0.4, circle, draw,fill=black]{};
\draw (1.2,-0.5) node [scale=0.4, circle, draw,fill=black]{};
\draw (2.5,-0.5) node [scale=0.4, circle, draw,fill=black]{};

\node [left] at (0,0.6){{\small$2$}};
\node [left] at (0,-0.6){{\small$1$}};
\node [right] at (1.2,0.6){{\small$4$}};
\node [right] at (1.2,-0.6){{\small$3$}};
\node [right] at (2.5,-0.6){{\small$5$}};

\end{tikzpicture}
}\end{minipage}\hspace{0.7cm} begins with 1, 2, 6, 24, 104, 448, 1888, 7808, 31872, ... and this sequence is not in \cite{oeis}. For another example, the avoidance sequence for the M-pattern \hspace{-3.5mm}
\begin{minipage}[c]{3.5em}\scalebox{1}{
\begin{tikzpicture}[scale=0.45]

\draw [line width=1](0,-0.5)--(0,0.5);
\draw [line width=1](1.2,-0.5)--(1.2,0.5);
\draw [line width=1](1.2,-0.5)--(0,0.5);
\draw [line width=1](2.5,-0.5)--(1.2,0.5);

\draw (0,-0.5) node [scale=0.4, circle, draw,fill=black]{};
\draw (0,0.5) node [scale=0.4, circle, draw,fill=black]{};
\draw (1.2,0.5) node [scale=0.4, circle, draw,fill=black]{};
\draw (1.2,-0.5) node [scale=0.4, circle, draw,fill=black]{};
\draw (2.5,-0.5) node [scale=0.4, circle, draw,fill=black]{};

\node [left] at (0,0.6){{\small$1$}};
\node [left] at (0,-0.6){{\small$3$}};
\node [right] at (1.2,0.6){{\small$2$}};
\node [right] at (1.2,-0.6){{\small$4$}};
\node [right] at (2.5,-0.6){{\small$5$}};

\end{tikzpicture}
}\end{minipage}\hspace{0.7cm} begins with 1, 2, 6, 24, 104, 448, 1888, 7808, 31872, ... and this sequence is also not in \cite{oeis}.

Finally, that could be interesting to obtain (general) enumerative/Wilf-equivalence results for POPs defined by bipartite graphs of some regular shapes, e.g. such as 

\begin{center}
\hspace{-3.5mm}
\begin{minipage}[c]{3.5em}\scalebox{1}{
\begin{tikzpicture}[scale=0.45]

\draw [line width=1](0,-0.5)--(0,0.5);
\draw [line width=1](1,-0.5)--(1,0.5);
\draw [line width=1](2,-0.5)--(2,0.5);
\draw [line width=1](5,-0.5)--(5,0.5);

\draw [line width=1](1,-0.5)--(0,0.5);
\draw [line width=1](0,-0.5)--(1,0.5);
\draw [line width=1](1,-0.5)--(2,0.5);
\draw [line width=1](2,-0.5)--(1,0.5);
\draw [line width=1](2,-0.5)--(3,0.5);
\draw [line width=1](3,-0.5)--(2,0.5);
\draw [line width=1](5,-0.5)--(4,0.5);
\draw [line width=1](4,-0.5)--(5,0.5);

\draw (0,-0.5) node [scale=0.4, circle, draw,fill=black]{};
\draw (0,0.5) node [scale=0.4, circle, draw,fill=black]{};
\draw (1,0.5) node [scale=0.4, circle, draw,fill=black]{};
\draw (1,-0.5) node [scale=0.4, circle, draw,fill=black]{};
\draw (2,0.5) node [scale=0.4, circle, draw,fill=black]{};
\draw (2,-0.5) node [scale=0.4, circle, draw,fill=black]{};
\draw (5,0.5) node [scale=0.4, circle, draw,fill=black]{};
\draw (5,-0.5) node [scale=0.4, circle, draw,fill=black]{};

\node [left] at (4.5,0){$\cdots$};

\end{tikzpicture}
}\end{minipage}\hspace{1.2cm}. 
\end{center}

\end{document}